\documentclass[11pt]{article}

\usepackage{amsmath, amsthm, amssymb, tikz, hyperref, enumerate}
\usepackage{amsfonts}
\usepackage{fullpage}
\usepackage[enableskew,vcentermath]{youngtab}

\newcommand\beq{\begin{equation}}
\newcommand\eeq{\end{equation}}
\newcommand\bce{\begin{center}}
\newcommand\ece{\end{center}}
\newcommand\bea{\begin{eqnarray}}
\newcommand\eea{\end{eqnarray}}
\newcommand\ba{\begin{array}}
\newcommand\ea{\end{array}}
\newcommand\ben{\begin{enumerate}}
\newcommand\een{\end{enumerate}}
\newcommand\bit{\begin{itemize}}
\newcommand\eit{\end{itemize}}
\newcommand\brr{\begin{array}}
\newcommand\err{\end{array}}
\newcommand\bt{\begin{tabular}}
\newcommand\et{\end{tabular}}

\newtheorem{theorem}{Theorem}[section]
\newtheorem*{theorem*}{Theorem}

\newtheorem{proposition}[theorem]{Proposition}
\newtheorem{lemma}[theorem]{Lemma}

\newtheorem{cor}[theorem]{Corollary}
\newtheorem{definition}[theorem]{Definition}
\newtheorem{example}[theorem]{Example}

\newtheorem{obs}[theorem]{Observation}

\newtheorem{defn}[theorem]{Definition}

\newtheorem{remark}[theorem]{Remark}

\newtheorem{observation}[theorem]{Observation}

\newcommand{\todo}[1]{\vspace{5 mm}\par \noindent
\marginpar{\textsc{ToDo}} \framebox{\begin{minipage}[c]{0.95
\textwidth}
 \end{minipage}}\vspace{5 mm}\par}

\begin{document}
\title{On the poset of king-non-attacking permutations}
%\thanks{This research was supported by a grant from the Ministry of Science and Technology, Israel, and the France's Centre National pour la Recherche Scientifique (CNRS)}
\author{Eli Bagno, Estrella Eisenberg, Shulamit Reches and Moriah Sigron}

\maketitle
\begin{abstract}
A king-non-attacking permutation is a permutation $\pi \in S_n$ such that $|\pi(i)-\pi(i-1)|\neq 1$ for each $i \in \{2,\dots,n\}$.
We investigate the structure of the poset of these permutations under the containment relation,
and also provide some results on its M\"obius function.  
\end{abstract}
\section{Introduction}
Hertzsprung's problem \cite{Sl} is to find the number of ways to arrange $n$ non-attacking kings on an $n \times n$ chess board such that each row and each column contains exactly one king.  
Let $S_n$ be the symmetric group on $n$ elements. 
In this paper we write permutations in a one line notation, $\sigma=[\sigma_1,\dots,\sigma_n]\in S_n$, though, in the examples, when possible we omit the commas.  
%Occasionally, we omit the commas as in $[3142]\in S_4$ or even the brackets as in $31425\in S_5$.

By identifying a permutation $\sigma=[\sigma_1,\dots,\sigma_n]\in S_n$  with its plot, i.e. the set of all lattice points of the form $(i,\sigma_i)$ where $1 \leq i \leq n$, the problem of placing $n$ non-attacking kings reduces to finding the number of permutations $\sigma \in S_n$ such that for each $1 < i  \leq n$, $|\sigma_i-\sigma_{i-1}|>1$. This set is counted in OEIS A002464.

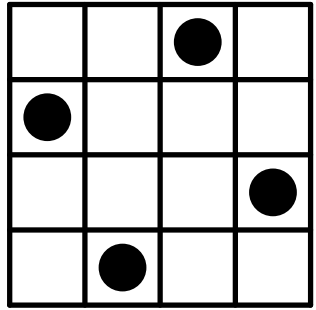
\begin{figure}[!ht]
  \centering
\begin{tikzpicture}[cap=round,line width=2pt]
\draw[-](0,0)--(4,0);
\draw[-](0,1)--(4,1);
\draw[-](0,2)--(4,2);
\draw[-](0,3)--(4,3);
\draw[-](0,4)--(4,4);
\draw[-](0,0)--(0,4);
\draw[-](1,0)--(1,4);
\draw[-](2,0)--(2,4);
\draw[-](3,0)--(3,4);
\draw[-](4,0)--(4,4);
\draw[line width=1pt][fill]  (0.5,2.5) circle (0.3cm);
\draw[line width=1pt][fill]  (1.5,0.5) circle (0.3cm);
\draw[line width=1pt][fill]  (2.5,3.5) circle (0.3cm);
\draw[line width=1pt][fill]  (3.5,1.5) circle (0.3cm);

\end{tikzpicture}
\caption{The plot of $[3142]$.}\label{fig1}
\end{figure}

Let $K_n$ be the set of all such permutations in $S_n$ and let us denote $\mathcal{K}= \cup_{n \in \mathbb{N}}K_n$. In this paper we call them simply {\it king permutations} or just {\it kings}.  
For example: 
$K_1=S_1, K_2=K_3=\emptyset$, $K_4=\{[3142],[2413]\}$. Observe that $K_n$ is closed under the inverse action, as can be seen in the grid, where this action corresponds to the reflection through the line $y=x$ and also under the reverse action: $\sigma_i \mapsto \sigma_{n+1-i}$.

An explicit formula for the number of king permutations was given by Robbins \cite{Ro}. He also showed that when $n$ tends to infinity, 
the probability of picking such a permutation from $S_n$ approaches $e^{-2}$.

In the table below, we present the number of king permutations
of order $n$ for low values of $n$.  
\begin{center}
\begin{tabular}{r||r|r|r|r|r|r|r|r|r|r|r|r}
 % \hline
  $n $ &  1 & 2 & 3 & 4 & 5 & 6 & 7 & 8 & 9 \\
  \hline\hline
 $|K_n|$  & 1 & 0  & 0  & 2  & 14  & 90  & 646  & 5242 & 47622    \\
   
%   7    & 1 & 1093 & 9219  & 12411  & 5075  & 791  & 49  & 1 \\
%  \hline
\end{tabular}
\end{center}

Let $\sigma,\pi \in \bigcup_{n \in \mathbb{N}}{S_n}$. We say that $\sigma$ {\it contains} $\pi$ if there is a subsequence of the one line notation of $\sigma$ that is order-isomorphic to that of $\pi$. As an example, $[3624715]$ contains $[3142]$, as both the subsequences $6275$ and $6475$ testify. 
If $\pi$ is contained in $\sigma$, then we write $\pi \preceq \sigma$, while if in 
addition $\pi\neq \sigma$, it will be written as $\pi \prec \sigma$.

The set of all permutations $\cup_{n \in \mathbb{N}} S_n$ is a poset under the partial order given by containment.
It has been studied in numerous works including \cite{BJKM},\cite{BM}.

We are interested in the sub-poset $\mathcal{K}=\cup_{n \in \mathbb{N}}K_n$ containing only the king permutations. Its minimal element is the identity permutation $[1]$, and in the next level appear $[2413]$ and $[3142]$.

%Recall A closed interval in a poset is the set defined by $$[\sigma,\tau]=\{\pi \mid \sigma \preceq \pi \preceq \tau],$$ an open interval is the set defined by $(\sigma,\tau)=\{\pi \mid \sigma \prec \pi \prec \tau]$. In a similar way we can define the half open intervals 
%$(\sigma,\tau]$ and $[\sigma,\tau)$.

In order to analyze properties of the poset we are dealing with, one can use the Manhattan or taxicab distance which is defined as follows:
\begin{definition}
Let $\sigma \in S_n$ and let $i,j\in [n]$. The (Manhattan) distance between the entries $\sigma_i$ and $\sigma_j$ is defined to be the $L_1$ distance between the corresponding points in the plot of $\sigma$:

$$d_{\sigma}(i,j)=||(i,\sigma_i)-(j,\sigma_j)||_1=|i-j|+|\sigma_i-\sigma_j|.$$
\end{definition}

The {\em breadth} of an element $\sigma \in S_n$ is defined in 
\cite{BHE} to be:
$$br(\sigma)=min_{i,j\in [n],i\neq j}d_{\sigma}(i,j).$$

It is easy to observe that for $n>1$ we have $\pi \in K_n$ if and only if $br(\pi)\geq 3$.
In a paper by Bevan, Homberger and  Tenner, \cite{BHE}, the authors define the notion of a {\it $k$-prolific permutation}. A permutation $\pi \in S_n$ is called $k$-prolific if each $(n-k)$-subset of the letters of $\pi=[\pi_1,\dots,\pi_n]$ forms a unique pattern.
It is shown there that $\sigma$ is $k$-prolific if and only if $br(\sigma)\geq k+2$. Hence, $\sigma \in K_n$ if and only if it is $1$- prolific. 
Indeed, to fail to be $1$-prolific means that there are $(n-1)$-subsequences of $\sigma$ that are order isomorphic, meaning that two adjacent positions in $\sigma$ are occupied by adjacent values.

\subsection{Overview and main results}

The first main result in this paper claims that the permutations $[2413]$ and $[3142]$ serve as building blocks of the poset of king permutations. 
Explicitly, we claim that for each $\pi \in K_n$ ($n \geq 4$), 
either $[2413] \preceq \pi$ or $[3142] \preceq \pi$.
Next, we have some results that focus on the structure of the poset of king permutations. Here, the first result deals with the density of the poset, claiming that for each pair of king permutations $\pi \prec \sigma$, there exists a chain of king permutations $\pi=\pi^0 \prec \pi^1 \cdots \prec \pi^k=\sigma$ such that $|\pi^i|-|\pi^{i-1}| \in \{1,3\}$,
 Where $|\pi|=l$ if $\pi \in K_l$. 
 
 We observe that the chains in the poset $\mathcal{K}$ might be saturated.  In order to characterize the permutations of which this phenomenon occurs we define the concept of a prince of a permutation. Namely, $\pi \in K_{n-1}$ is called a {\it prince} of $\sigma \in K_n$ if $\pi \prec \sigma$. 
We give a complete description of all the permutations which have no princes. We show that the only possibility for a king to have no princes is where it can be presented as an inflation of kings of the forms $[2413]$ and $[3142]$. 
Then, we prove that $K_5$ intersects the downset of each king. 
Finally, dealing with the 
M\"obius function, $\mu$, of the poset of king permutations, we prove that the $\mu$ vanishes for each king permutation which does not contain both $[2413]$ and $[3142]$.

The rest of the paper is organized as follows.
Section \ref{background} contains background material including blocks,
simple permutations,
inflation, and the M\"obius function.
In Section \ref{structure} we present our main results regarding the structure of the poset of the king permutations. 
In Section~\ref{mobious} we consider the M\"obius function of the poset of the king permutations and we introduce some results regarding this poset.
%Finally, in Section \ref{Moriah}, we consider a specific set of king permutations that its poset has special features.  

\section{Background}\label{background}
In order to better understand the structure of the poset of king permutations, we start by 
presenting some preliminaries concerning simple permutations and the M\"obius function.  Original papers will be mentioned occasionally, but terminology and notation will follow (with a few convenient exceptions) the recent survey \cite{Vatter}.

\begin{definition} \label{blocks}
Let $\pi=[\pi_1, \ldots ,\pi_n] \in S_n$.   
A {\em block} (or {\em interval}) of $\pi$ is a nonempty contiguous
sequence of entries $\pi_i \pi_{i+1} \ldots \pi_{i+k}$ whose values also form a contiguous sequence of integers. A block of length $k$ will be also called a {\em $k$-block}, and it will be called a {\em strict $k$-block} if it is not contained in a $k+1$ block. 
\end{definition}

\begin{example}
If $\pi = [2647513]$ then $6475$ is a block but $64751$ is not. 
\end{example}

Each permutation can be decomposed into singleton blocks, and also forms a single block by itself; these are the {\em trivial blocks} of the permutation. All other blocks are called {\em proper}.

\begin{definition}\label{simple permutation}
A permutation %of length $n \ge 2$ 
is {\em simple} if it has no proper blocks. 
\end{definition}

\begin{example}\label{ex:simple up to 5}
The permutation $[3517246]$ is simple. 
\end{example}

Note that a permutation is in $K_n$ iff it has no $2$-block. Therefore, the set of simple permutations \cite{Adin} is a pure subset of the set of king permutations. Clearly, there are king permutations which are not simple, for example $[31425]$. 

%for $n=1$ and $n=2$ are $1$, $12$ and $21$. 
%The simple permutations of length $n \le 2$ are $[1]$, $[12]$ and $[21]$.
%There are no simple permutations of length $n=3$. 
%Those of length $n=4$ are $[2413]$ and its inverse (which is also its reverse).
%For length $n=5$ they are $[24153]$, $[41352]$, their reverses and their inverses (altogether $6$ %permutations). 

\begin{definition}
A {\em block decomposition} of a permutation is a partition of it into disjoint blocks. 
\end{definition}

For example, the permutation $\sigma=[67183524]$  can be decomposed as $67\ 1\ 8\ 3524$. 
In this example, the relative order between the blocks forms the permutation $[3142]$, i.e., if we take for each block one of its elements as a representative then the sequence of representatives is order-isomorphic to $[3142]$. 
Moreover, the block $67$ is order-isomorphic to $[12]$, and the block $[3524]$ is order-isomorphic to $[2413]$. These are instances of the concept of {\em inflation}, defined as follows.

\begin{definition}
\label{inflation}
Let $n_1, \ldots, n_k$ be positive integers with $n_1 + \cdots + n_k = n$.
The {\em inflation} of a permutation $\pi \in S_k$ by permutations $\alpha^i \in S_{n_i}$ $(1 \leq i \leq k)$ is 
the permutation $\pi[\alpha^1, \ldots, \alpha^k] \in S_n$ obtained by replacing the $i$-th entry of $\pi$ by a block which is order-isomorphic to the permutation $\alpha^i$
on the numbers $\{s_i + 1, \ldots, s_i + n_i\}$ instead of $\{1, \ldots, n_i\}$, where $s_i = n_1 + \cdots + n_{i-1}$ $(1 \leq i \leq k)$. 
\end{definition}

\begin{example}
The inflation of $[2413]$ by $[213],[21],[132]$ and $[1]$ is 
\[
2413[213,21,132,1]=546  98  132  7\in S_9.
\]
\end{example}

\begin{definition}
 Let $\sigma \in K_n$. An element $\tau \in K_{n-1}$ will be called a prince of $\sigma$
 if $\tau \prec \sigma$. In other words, if $\tau$ is obtained  by omitting exactly one element 
 from $\sigma$ and is still a king permutation then we call $\tau$ a prince of $\sigma$. In this case we also
 say that $\sigma$ has a prince. 
 \end{definition}
 
 \begin{example}
The king permutation $\tau=[41352]$ is a prince of the king permutation $\sigma=[524613]$. 
$\pi=[3142]$ is prince of $\tau$, but  not a prince of $\sigma$.
\end{example}

In order to define the M\"obius function of the poset $\mathcal{K}$, we recall the definition of an interval. 

\begin{definition}
 The closed interval $[\tau,\sigma]$ is defined as:
$$[\tau,\sigma]=\{\pi \in \mathcal{K} \mid \tau \preceq \pi \preceq \sigma\}.$$
The half-open interval is defined as:
$$[\tau,\sigma)=\{\pi \in \mathcal{K} \mid \tau \preceq \pi \prec \sigma\}.$$
\end{definition}

Now, we can define the M\"obius function for the poset $\mathcal{K}$.
\begin{definition}

\[
\mu(\tau,\sigma) =
\begin{cases} 
0, &  \tau \npreceq  \sigma; \\
1, &  \tau =\sigma; \\
-\sum\limits_{\pi \in [\tau,\sigma) }\mu(\tau,\pi), & \text{Otherwise}. 
\end{cases}
\]
When $\tau=[1]$, the identity permutation of length $1$, we write $\mu(\pi):=\mu([1],\pi)$. 

\end{definition}

%\begin{definition}
%Let $[x,y]$ be a finite interval in a poset $P$. We say that $[x,y]$ is narrow-tipped if it contains an element $z$ such that $[x,y)=[x,z]$. In this case $z$ is called the core of $[x,y]$. 
%We say that a closed interval $[x,y]$ is a diamond-tipped if there are $3$ elements $z,z',w$ all different from $x$, and such that $[x,y)=[x,z] \cup %[x,z']$ and $[x,z]\cap [x,z']=[x,w]$.  
%\end{definition}

%Figure \ref{fig:narrow tip} demonstrates a poset with a diamond-tipped and a poset with
%narrow-tipped.
%\begin{figure}
%\begin{center}
%\hspace*{-4cm}
%\includegraphics[scale=0.7]{}
%\caption{left:narrow tipped poset,   right: diamond-tipped poset}
%\label{fig:narrow tip}
%\end{center}
%\end{figure}

%The following result appears in \cite{BJKM}

%\todo{Verify that we use this marchant's dedintion}
%\begin{proposition}
%Let $P$ be a poset with M\"obius function $\mu$ and let $[x,y]$ be a closed %interval in $P$. If $[x,y]$ is narrow-tipped or diamond-tipped then %$\mu[x,y]=0$.  
%\end{proposition}

\section{The structure of the poset of king permutations}
\label{structure}

In this section we study the structure of the poset of king permutations with respect to the containment relation. We start with the building blocks. 

The elements $[2413]$ and $[3142]$ in the poset of simple permutations (see Definition \ref{simple permutation}) have a special role as every simple permutation must contain at least one of them (see \cite{AD}). As we show below, the same is true for king permutations. In order to do this, we start with the following definitions. 

\begin{definition}
For each permutation $\pi \in S_n,$ $ \pi=[\pi_1,\pi_2,\ldots,\pi_n]$ and $i \in \{1,2,\ldots,n\}$, define the permutation $\nabla_i(\pi) \in S_{n-1}$ by deleting the $i-$th entry of $\pi$ and standardizing the remaining entries. More precisely:
 for each $1 \leq k \leq n-1$, the $k-$th entry of $\nabla_i(\pi)$ is

for $k < i$:

$$\big(\nabla_i(\pi)\big)_k=
\begin{cases}
\pi_k, &  \pi_k<\pi_i \\
 \pi_k -1 , & \pi_k>\pi_i
\end{cases} \\$$

and for $k \geq i$:

$$\big(\nabla_i(\pi)\big)_k=
\begin{cases}
\pi_{k+1}, &  \pi_{k+1}<\pi_i \\
 \pi_{k+1} -1 , & \pi_{k+1} >\pi_i
\end{cases} \\$$

Note that if we omit from $\pi$ the element $i$ rather then the $i$-th entry, we actually omit the entry $(\pi^{-1})_i$ from $\pi$, and we notate  $\nabla_{\pi^{-1}(i)}(\pi)$ by  $\nabla^*_i(\pi)$.
 
For example, if $\pi=[641325]$ then to omit the element $4$, we use $\nabla^*_4(\pi)=\nabla_2(\pi)=[51324]$.   
\end{definition}

%For $i \neq j$, let us denote by $\delta_{i,j}(\sigma)$ the element which substitutes $\sigma_j$ in $\nabla_i(\sigma)$.  
%For example, if $\sigma=[641325]$ then $\nabla_2(\pi)=[51324]$ and thus $\delta_{2,4}(\sigma)=2$.

If one omits two elements from a permutation $\pi$ in a sequence, 
it is more convenient notation-wise to omit first the rightmost of the two (in the case of $\nabla$) or the larger (in the case of $\nabla^*)$. This is the content of the next observation:

\begin{observation}\label{omit the bigger first}
\begin{enumerate}
Let $\pi \in S_n$. Then for each $1 \leq j<i \leq n$:

\item $\nabla_j(\nabla_i(\pi))=\nabla_{i-1}(\nabla_j(\pi))$. 
\item $\nabla^*_j(\nabla^*_{i}(\pi))=\nabla^*_{i-1}(\nabla^*_{j}(\pi))$
\end{enumerate}

\end{observation}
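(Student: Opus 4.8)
The plan is to interpret both operations as ordinary deletion-and-standardization and to reduce everything to the single fact that standardization commutes with passing to a subword. Write $\st(w)$ for the standardization of a word $w$ with distinct integer entries, and for a set $S$ of positions let $w|_S$ denote the subword retaining exactly the entries in the positions belonging to $S$, listed in their original order. By definition $\nabla_i(\pi)=\st(\pi|_{[n]\setminus\{i\}})$. The first step is to record the elementary lemma that for any such $w$ and any position set $S$ one has $\st(w|_S)=\st\big(\st(w)|_S\big)$: both sides depend only on the relative order of the retained entries, and standardization preserves relative order, so the two subwords are order-isomorphic and hence standardize to the same permutation.

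With this lemma in hand I would prove part (1) by showing that each side equals $\st(\pi|_{[n]\setminus\{i,j\}})$. For the left-hand side, deleting position $i$ produces $\nabla_i(\pi)$, and since $j<i$ the entry originally in position $j$ still sits in position $j$ of $\nabla_i(\pi)$; applying the lemma twice gives $\nabla_j(\nabla_i(\pi))=\st(\pi|_{[n]\setminus\{i,j\}})$. For the right-hand side, deleting position $j$ first shifts every later position down by one, so the entry originally in position $i$ now occupies position $i-1$; hence $\nabla_{i-1}(\nabla_j(\pi))$ removes exactly the original positions $j$ and $i$ and likewise equals $\st(\pi|_{[n]\setminus\{i,j\}})$. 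The only real bookkeeping here is the shift $i\mapsto i-1$, which is precisely what the statement compensates for.

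For part (2), the plan is to transport the argument through the inverse rather than repeat it. I would first verify the identity $\nabla^*_i(\pi)=\big(\nabla_i(\pi^{-1})\big)^{-1}$: deleting the value $i$ from $\pi$ and standardizing the values is, after reflecting the plot across the line $y=x$, exactly deleting the entry in position $i$ of $\pi^{-1}$ and standardizing the positions. Granting this identity, part (2) is mechanical. Setting $A=\nabla^*_i(\pi)$ and unwinding, one gets $\nabla^*_j(\nabla^*_i(\pi))=\big(\nabla_j(\nabla_i(\pi^{-1}))\big)^{-1}$, while $\nabla^*_{i-1}(\nabla^*_j(\pi))=\big(\nabla_{i-1}(\nabla_j(\pi^{-1}))\big)^{-1}$; these two agree by part (1) applied to $\pi^{-1}$, and the outer index $i-1$ is untouched by the inversions, matching the statement.

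I expect the main obstacle to be notational rather than mathematical: keeping the position relabelling straight when two deletions are composed, and checking the inverse identity $\nabla^*_i=(\,\cdot\,)^{-1}\circ\nabla_i\circ(\,\cdot\,)^{-1}$ carefully enough that the off-by-one shift transports correctly. Once the subword lemma and this inverse identity are stated cleanly, both parts fall out immediately.
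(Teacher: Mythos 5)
Your proposal is correct. Note that the paper itself supplies no proof of this observation at all: it is stated as self-evident and followed only by a worked example (the diagram for $[7426153]$), so any complete argument necessarily goes beyond what the authors wrote. Your two ingredients are exactly the right ones to make the observation rigorous: the subword lemma $\st(w|_S)=\st\bigl(\st(w)|_S\bigr)$ reduces part (1) to the single bookkeeping fact that deleting position $j<i$ shifts the original position $i$ to $i-1$, and the conjugation identity $\nabla^*_i(\pi)=\bigl(\nabla_i(\pi^{-1})\bigr)^{-1}$ transports part (1) to part (2) without redoing the case analysis. The latter identity is sound (it is the reflection of the plot across $y=x$, the same duality the paper invokes elsewhere, e.g.\ in the remark that $Sep_v(\sigma)=Sep_h(\sigma^{-1})$), and it correctly converts the position shift $i\mapsto i-1$ into the value shift $i\mapsto i-1$, since values above a deleted value drop by one under standardization just as positions past a deleted position do. An equally short alternative for part (2) is the direct value-side argument (deleting value $j<i$ renames value $i$ to $i-1$, and both sides delete the original values $\{i,j\}$), but your inverse-transport route is cleaner in that it proves the bookkeeping only once.
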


\begin{example}

\end{example}
\begin{tikzpicture}
        \tikzstyle{every node} = [rectangle]
        \node (7426153) at (0,0) {$[7 {\bf4} 26 {\bf1} 53]$};
        \node at (-2.1,-1) {$\nabla^*_4=\nabla_2$};
        \node (625143) at (-2.2,-2) {$[625 {\bf1} 43]$};
        \node at (1.95,-1) {$\nabla^*_1=\nabla_5$};
        \node (631542) at (2.2,-2) {$[6 {\bf3} 1542]$};
         \node at (-2.1,-3) {$\nabla^*_1=\nabla_4$};
         \node at (2.2,-3) {$\nabla^*_3=\nabla_2$};
        \node  (51432) at (0,-4) {$[51432]$}; 
        
        \foreach \from/\to in {7426153/625143, 7426153/631542,625143/51432,631542/51432}
            \draw[->] (\from) -- (\to);
    \end{tikzpicture}

It is easy to see that omitting either element of a $2$-block results in the same permutation. 
\begin{obs}\label{2-block}
If $\sigma$ has a $2$-block $\sigma_j=a$, $\sigma_{j+1}=a \pm 1$, then $\nabla_j(\sigma)=\nabla_{j+1}(\sigma)$.
\end{obs}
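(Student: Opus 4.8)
\section*{Proof proposal}

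The plan is to reduce the claim to a statement about the relative order of points in the plot, exploiting that both $\nabla_j(\sigma)$ and $\nabla_{j+1}(\sigma)$ are produced by \emph{standardizing} the surviving entries, and that standardization records only the pairwise comparisons of positions and of values. Write $b=\sigma_{j+1}$, so that $\{a,b\}=\{a,a\pm1\}$ is a pair of consecutive integers sitting in the consecutive positions $j,j+1$. The first step is to record the key \emph{separation property}: for every index $i\notin\{j,j+1\}$, the entry $\sigma_i$ lies on the same side of the block in \emph{both} coordinates. Indeed, since $i\neq j,j+1$ we have $i<j \iff i<j+1$ (and likewise $i>j+1 \iff i>j$), so $(i,\sigma_i)$ is horizontally either to the left of both points of the block or to the right of both; and since $a,b$ are consecutive integers while $\sigma_i\notin\{a,b\}$, we have $\sigma_i<\min(a,b)$ or $\sigma_i>\max(a,b)$, so $(i,\sigma_i)$ is vertically either below both points or above both.

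With this in hand, the second step is to compare the two deletions configuration by configuration. Deleting $\sigma_j$ keeps the block point $(j+1,b)$, while deleting $\sigma_{j+1}$ keeps $(j,a)$; in each case the surviving $n-1$ points consist of this single ``block representative'' together with the common points $\{(i,\sigma_i):i\neq j,j+1\}$. By the separation property, each common point has exactly the same horizontal and vertical relation to whichever block representative was kept. Hence the two $(n-1)$-point configurations are order-isomorphic, and their standardizations, namely $\nabla_j(\sigma)$ and $\nabla_{j+1}(\sigma)$, coincide.

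As an alternative more closely tied to the explicit value-shift rules in the definition of $\nabla_i$, one can verify the equality entry by entry. In the increasing case $b=a+1$, deleting the value $a$ sends the entry of value $a+1$ down to $a$ and decrements every larger entry, whereas deleting the value $a+1$ fixes the entry of value $a$ and decrements every entry above $a+1$; because no value lies strictly between $a$ and $a+1$, the set of entries exceeding $a$ and the set exceeding $a+1$ agree apart from the block itself, and one checks that both procedures assign the same value at each new index. The decreasing case $b=a-1$ follows by the symmetric argument with the roles of ``above'' and ``below'' interchanged.

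I do not anticipate a genuine obstacle. The only delicate point is the index bookkeeping under deletion: the new position $j$ is fed by the original position $j+1$ under $\nabla_j$, but by the original position $j$ under $\nabla_{j+1}$, and one must confirm that this single off-by-one shift is exactly absorbed by the value the surviving block representative takes after standardization. Making that cancellation explicit is the heart of the verification, but it is routine once the separation property is in place.
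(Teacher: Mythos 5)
Your proof is correct. The paper itself gives no proof of this observation --- it is prefaced only by the remark that ``it is easy to see that omitting either element of a $2$-block results in the same permutation'' --- so your argument supplies the justification the paper omits, and it is the natural one: the separation property (every surviving point lies on the same side of both block points, in each coordinate) shows the two $(n-1)$-point configurations are order-isomorphic under the map fixing the common points and sending one block representative to the other, hence their standardizations $\nabla_j(\sigma)$ and $\nabla_{j+1}(\sigma)$ agree. The entry-by-entry check via the value-shift rules for $\nabla$ is a sound alternative, and your off-by-one concern indeed resolves as you expect: in both cases the new entry at position $j$ standardizes to $a$ when $\sigma_{j+1}=a+1$ and to $a-1$ when $\sigma_{j+1}=a-1$.
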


In order to investigate the structure of the poset of king permutations we introduce a new concept called a 
{\it separator}. A comprehensive discussion of this concept can be found in \cite{BERS}.

\begin{definition}\label{def separate}
For $\sigma=[\sigma_1,\ldots,\sigma_n] \in S_n$ we say that $\sigma_i={\bf \textcolor{red}{a}}$ {\em separates} $\sigma_{j_1}$ from $\sigma_{j_2}$ in $\sigma$ if by omitting $a$ from $\sigma$ we get a {\bf new} $2-$block. This happens if and only if one of the following cases holds: 

\begin{enumerate}
\item $j_1,i,j_2$ are subsequent numbers and $|\sigma_{j_1}-\sigma_{j_2}|=1$, i.e $$\sigma=[\ldots,{\bf b},{\bf \textcolor{red}{a}},{\bf b \pm 1},\ldots]$$
In this case we say that $a$ is a {\bf vertical separator}.
\item $\sigma_{j_1},\sigma_i=a,\sigma_{j_2}$ are subsequent numbers and $|j_1-j_2|=1$, i.e, $$\sigma=[\ldots,{\bf \textcolor{red}{a}},\ldots,{\bf a \pm1,a\mp1},\ldots]$$ or $$\sigma=[\ldots,{\bf a\pm1,a\mp1},\ldots,{\bf \textcolor{red}{a}},\ldots].$$
In this case we say that $a$ is a {\bf horizontal separator}.
\end{enumerate}

\end{definition}

Note that in the term \textbf{separator} we refer to the \textbf{value} of the element and \textbf{not} to its position.

\begin{defn}
Let $Sep_v(\pi)$ and $Sep_h(\pi)$ be the sets of vertical and horizontal separators of $\pi$,  respectively. 
%Let $Sep(\pi)=Sep_v(\pi) \cup Sep_h(\pi)$ and $sep(\pi)=|Sep(\pi)|$ 
\end{defn}

\begin{example}
Let $\sigma=[132465879]$.   Then $Sep_{v}(\sigma)=\{3,2,6,7\}$, and $Sep_h(\sigma)=\{3,2,5,8\}$. %Thus $Sep(\pi)=\{ 3,2,5,6,7,8\}$ and $sep(\sigma)=6$. 
Note that $7$ is a vertical separator, even though $7$ is a part of a $2-$block: $87$, since by omitting $7$ from $\sigma$ we get a $\bf{new}$ $2-$block: $78$. 
\end{example}

\begin{remark}\label{obs on Sep}
Several comments are now in order:
\begin{enumerate}

\item Notice the significance of the word {\bf new} in Definition \ref{def separate}. For example, the identity permutation has plenty of $2$-blocks even though it has no separators.
\item The numbers $1$ and $n$ can only be vertical separators, while  $\sigma_1$ and $\sigma_n$ can only be horizontal separators.
\item If $\sigma_i$ is a vertical separator in $\sigma$ then $i$ is a horizontal separator in $\sigma^{-1}$. Hence $Sep_v(\sigma)=Sep_h(\sigma^{-1})$

\item $Sep_v(\sigma)=Sep_v(\sigma^r)$ and  $Sep_h(\sigma)=Sep_h(\sigma^r)$ where $\sigma^r$ is the reverse of $\sigma$.  

\end{enumerate} 
\end{remark}

In the following theorem we present our first main result which claims that the permutations $[2413]$ and $[3142]$ serve as building blocks for the poset of king permutations. 

\begin{theorem}\label{Each king contains 3142 or 2413}
For every $\pi \in K_n$ ($n \geq 4$), either $[2413] \preceq \pi$ or $[3142] \preceq \pi$. 
\end{theorem}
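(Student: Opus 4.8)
The plan is to argue by strong induction on $n$, exploiting the dichotomy between simple and non-simple permutations together with the classification of Albert and Atkinson \cite{AD} already quoted above: every simple permutation of length at least $4$ contains $[2413]$ or $[3142]$. The role of the king hypothesis is funneled entirely through the single arithmetic fact $K_2=K_3=\emptyset$, which severely restricts the possible lengths of proper blocks.

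For the base case $n=4$ one simply notes that $K_4=\{[2413],[3142]\}$, so the statement holds trivially. For the inductive step I would fix $\pi\in K_n$ with $n\geq 5$ and split according to whether $\pi$ is simple. If $\pi$ is simple, then since $n\geq 4$ the result of \cite{AD} applies verbatim and yields $[2413]\preceq\pi$ or $[3142]\preceq\pi$. If $\pi$ is not simple, then by Definition~\ref{simple permutation} it has a proper block $B$, that is, a contiguous factor whose values also form a contiguous set of integers, with $1<|B|<n$.

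Two observations then close the argument. First, the standardization of $B$ is again a king: for two consecutive positions lying inside $B$ the values are exactly those of $\pi$, so the inequality $|\pi_i-\pi_{i+1}|\geq 2$ is inherited, and standardizing merely subtracts a constant, which preserves all differences. Second, because $K_2=K_3=\emptyset$, a king can contain no block of length $2$ or $3$; combined with $|B|>1$ this forces $|B|\geq 4$. Since also $|B|<n$, the induction hypothesis applies to the standardization of $B$ and gives $[2413]\preceq B$ or $[3142]\preceq B$. As $B$, read in place, is a subsequence of $\pi$ order-isomorphic to its standardization, we have $B\preceq\pi$, and hence $\pi$ contains the same pattern. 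This completes the induction.

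I expect the genuine mathematical content to be borrowed rather than re-derived: the one hard external input is the Albert--Atkinson classification \cite{AD}, and the main obstacle in writing the proof honestly is verifying that the reduction from kings to simple permutations is both exhaustive and length-decreasing. The crux is precisely the interaction of the block structure with $K_2=K_3=\emptyset$, which guarantees that a non-simple king always has a \emph{strictly shorter} king block on which to recurse; without this length jump the induction would stall. An alternative, more self-contained route would bypass \cite{AD} and attack the statement directly through the separator machinery of Definition~\ref{def separate}, but that approach is considerably more delicate, whereas the simple-versus-block induction above is short and transparent.
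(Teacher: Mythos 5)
Your proof is correct, but it takes a genuinely different route from the paper's. The paper gives a self-contained induction that never invokes the classification of simple permutations: assuming $\pi\in K_n$ avoids both patterns, it removes the value $1$ (via $\nabla_1^*$), uses the induction hypothesis to conclude that $1$ must be a vertical separator, so $\pi$ contains the consecutive subsequence $a,1,a+1$; it then locates the value $2$ (if $2$ lies to the right, $a,1,a+1,2$ is already a $[3142]$ pattern), and finally removes $a+1$, forcing it to be a horizontal separator, which produces $2,a+2,1,a+1$, a $[2413]$ pattern --- a contradiction. Your argument instead splits on whether $\pi$ is simple: the simple case is delegated wholesale to the Albert--Atkinson-type result cited as \cite{AD}, and the non-simple case rests on the clean observation that any proper block of a king standardizes to a king (differences between adjacent entries are preserved), so $K_2=K_3=\emptyset$ forces every proper block to have length at least $4$ and strictly less than $n$, letting strong induction close the loop. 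What each approach buys: yours is shorter, more modular, and makes transparent the structural kinship between kings and simple permutations, but it imports the nontrivial external theorem that every simple permutation of length at least $4$ contains $[2413]$ or $[3142]$ (which needs the caveat ``length at least $4$,'' as you correctly note, and whose proof is itself substantial). The paper's proof is longer in case analysis but entirely elementary, and --- importantly for the rest of the paper --- it exercises the separator machinery (vertical versus horizontal separators, Remark~\ref{obs on Sep}) that is reused in Lemma~\ref{i and j}, Proposition~\ref{grandson implies son}, and Theorem~\ref{1 or 3}, so it earns its keep beyond this one theorem.
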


\begin{proof}
By induction on $n \geq 4$, we will prove that for every $n$, and for every $\pi \in K_n$, either $[2413] \preceq \pi$ or $[3142] \preceq \pi$.  For $n=4$ this is trivial since $K_4=\{[3142],[2413]\}$. Now assume $n > 4$, and assume that each king permutation of order $n-1$ contains at least one of the permutations $[2413],[3142]$. We will prove that each king permutation of order $n$ contains at least one of these patterns, too. Let $n > 4$ and assume to the contrary that there is $\pi \in K_n$ which does not contain either $[2413]$ or $[3142]$. Then $\sigma=\nabla_1^*(\pi)$ contains neither of them as well. By the induction hypotheses, $\sigma \notin K_{n-1}$, which implies, by Remark \ref{obs on Sep}, that $1$ is a vertical separator.
Hence we must have: $\pi=[\ldots, a,1,a+1,\ldots]$ or $\pi=[\ldots, a+1,1,a,\ldots]$. 
Without loss of generality, assume that  $\pi=[\ldots, a,1,a+1,\ldots]$. 

Let us check the location of $2$ in $\pi$. Obviously, $a=2$ or $a+1=2$ is impossible since $\pi \in K_n$, so we may assume that $a>2$. If $2$ is located to the right of $a+1$, then $\pi=[\ldots ,a,1,a+1, \ldots ,2 ,\ldots]$ and so $a,1,a+1,2$ is a $[3142]$– pattern, so we may assume that $2$ is located to the left of $a$, thus $\pi=[\ldots, 2, \ldots, a,1,a+1, \ldots]$. Consider now $\tau=\nabla_{a+1}^*(\pi)$, the result of removing $a+1$ from $\pi$. Again, according to the induction hypothesis, $\tau$ is not a king permutation and thus, must contain a $2$-block. Clearly, $a+1$
 is not a vertical separator since the element $2$ is far left. Therefore, $a+1$ is a horizontal separator. Hence $\pi=[\ldots, 2, \ldots, a+2,a,1,a+1, \ldots]$ and $2,a+2,1,a+1$ forms a $[2413]$ pattern, which contradicts our assumption. 

%Case 2 is treated in a similar way:
%We have π=⋯a+1,1,a⋯. If 2 is located to the left of a+1 then we have 2⋯%a+1,1,a,⋯. Since a≥3, we have here a pattern of 2413. Assume now that 2 is %located to the right of a.  Then we have π=⋯a+1,1,a,⋯2⋯  . Consider now % 〖τ=Δ〗_a (π).  Since τ must contain a block, in π , a must conceal a block, %so that we have
% π=⋯b,a,1,a+1⋯2 . Here we must have b=2 which is obviously a contradiction. 

\end{proof}

\begin{remark}
In \cite{BBL}, Bose, Buss, and Lubiw showed that every permutation that avoids $[3142]$ and $[2413]$ is {\emph separable}. Therefore, Theorem \ref{Each king contains 3142 or 2413} is equivalent to saying that no king permutation is separable. 
\end{remark}

The following lemma refers to the separators, and will be used later in some theorems related to the structure of the poset of the King permutations.

\begin{lemma}\label{i and j}
Let $\sigma\in K_n$ and let $1 \leq j< i\leq n$. If $\nabla_j(\nabla_i(\sigma))\in K_{n-2}$ but $\nabla_i(\sigma)\notin K_{n-1}$ and $\nabla_j(\sigma)\notin K_{n-1}$ then $\sigma_i$ separates $\sigma_j$  
from some element and $\sigma_j$ separates $\sigma_i$ from some element. 
\end{lemma}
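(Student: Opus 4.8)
The plan is to argue by contrapositive-style analysis of what it means for $\nabla_i(\sigma)$ and $\nabla_j(\sigma)$ to fail to be king permutations. Since $\sigma \in K_n$ has no $2$-block, but $\nabla_i(\sigma) \notin K_{n-1}$, deleting $\sigma_i$ must \emph{create} a new $2$-block; by Remark \ref{obs on Sep} this says precisely that $\sigma_i$ is a separator (vertical or horizontal) of $\sigma$. Symmetrically, $\sigma_j$ is also a separator of $\sigma$. So the separator information is immediate; the real content is to show that $\sigma_i$ separates specifically $\sigma_j$ (from some element), and $\sigma_j$ separates specifically $\sigma_i$.

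\textbf{Key steps.}
First I would set up notation: let $B_i$ denote the $2$-block created in $\nabla_i(\sigma)$ and $B_j$ the $2$-block created in $\nabla_j(\sigma)$, recording in each case the pair of entries (positions/values) that become adjacent-and-consecutive after the deletion. The crucial observation is the hypothesis $\nabla_j(\nabla_i(\sigma)) \in K_{n-2}$: deleting \emph{both} $\sigma_i$ and $\sigma_j$ destroys all $2$-blocks. Using Observation \ref{omit the bigger first} to commute the two deletions, I would analyze the newly-formed block $B_i$ in $\nabla_i(\sigma)$ and ask why it disappears after the further deletion of $\sigma_j$. A $2$-block in $\nabla_i(\sigma)$ can only be destroyed by deleting one of \emph{its own two constituent entries}, or by deleting an entry whose removal merges/shifts it away — but for a $2$-block, the only way the subsequent deletion of $\sigma_j$ can eliminate it is that $\sigma_j$ is one of the two entries of $B_i$. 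Hence $\sigma_j$ is a member of the $2$-block that $\sigma_i$'s deletion created, which is exactly the statement that $\sigma_i$ separates $\sigma_j$ from the other entry of $B_i$. Running the same argument with the roles of $i$ and $j$ swapped (again via Observation \ref{omit the bigger first}) shows $\sigma_i$ belongs to $B_j$, so $\sigma_j$ separates $\sigma_i$.

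\textbf{Main obstacle.}
The delicate step is the claim that deleting $\sigma_j$ can destroy the $2$-block $B_i$ \emph{only} by removing one of $B_i$'s two entries. I would need to rule out the possibility that $\sigma_j$ lies outside $B_i$ yet its removal still breaks the $2$-block; since a $2$-block is a pair of entries that are adjacent in position and consecutive in value, deleting an external entry can shift positions or values by standardization. The careful point is that standardization under $\nabla_j$ shifts a whole range of values uniformly and deletes one position, so a pair that is position-adjacent and value-consecutive \emph{remains} position-adjacent and value-consecutive after deleting any entry not equal to one of the pair — the gap of $1$ in value and the adjacency in position are both preserved unless the deleted entry sits strictly between them in value (impossible, as they are consecutive) or strictly between them in position (impossible, as they are adjacent). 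Spelling this invariance out rigorously, and simultaneously checking that a \emph{second} preexisting-or-new $2$-block cannot secretly survive, is where the bookkeeping lives; but once the invariance is established, the conclusion that $\sigma_j \in B_i$ and $\sigma_i \in B_j$ follows directly, and with it the separation statements.
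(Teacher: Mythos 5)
Your proof is correct and follows essentially the same route as the paper's: deleting $\sigma_i$ creates a $2$-block (so $\sigma_i$ is a separator), the hypothesis $\nabla_j(\nabla_i(\sigma))\in K_{n-2}$ forces $\sigma_j$ to be one of the two entries of that block, and the symmetric statement follows by commuting the two deletions via Observation \ref{omit the bigger first}. In fact, you give more detail than the paper at the crucial step --- the paper simply asserts that the block can disappear only if $\sigma_j$ belongs to it, whereas you justify this with the invariance argument that deleting an entry outside a $2$-block preserves both its position-adjacency and its value-consecutiveness.
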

\begin{proof}
If $\nabla_i(\sigma)\notin K_{n-1}$ then $\sigma_i$ is a separator, i.e., there is a block in $\nabla_i(\sigma)$. Now, $\nabla_j(\nabla_i(\sigma))\in K_{n-2}$, which can happen only if $\sigma_j$ is a part of this block.

Recall that by Observation \ref{omit the bigger first}, $\nabla_j(\nabla_i(\sigma))=\nabla_
{i-1}(\nabla_j(\sigma))$ and the proof is complete by interchanging the rules of $\sigma_i$ and $\sigma_j$.   

\end{proof}

\begin{lemma}\label{same type}
Let $\sigma \in K_n$. Assume that $\sigma_i$ separates $\sigma_j$ from some element and that $\sigma_j$ separates $\sigma_i$ from some element. Then $\sigma_i$ and $\sigma_j$ are separators of the same type. 
\end{lemma}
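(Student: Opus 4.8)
The plan is to analyze the two configurations—one where $\sigma_i$ separates $\sigma_j$ and one where $\sigma_j$ separates $\sigma_i$—and show that the mixed case (one vertical, one horizontal) is impossible. By Definition \ref{def separate}, "$\sigma_i$ separates $\sigma_j$ from some element" means that in the plot of $\sigma$, the point $(i,\sigma_i)$ lies either strictly between two points that are horizontally adjacent in value and consecutive in position (vertical separator) or strictly between two points that are vertically adjacent in value and consecutive in value (horizontal separator), with $\sigma_j$ being one of the two separated elements. Concretely, if $\sigma_i$ is a \textbf{vertical} separator of $\sigma_j$, then (up to reversal) $\sigma$ reads $[\ldots, \sigma_j, \sigma_i, \sigma_j \pm 1, \ldots]$ with $j = i-1$ (or $j=i+1$), so that $\sigma_j$ and $\sigma_i$ occupy adjacent \emph{positions} and $|\sigma_j-\sigma_i|$ is unconstrained; whereas if $\sigma_i$ is a \textbf{horizontal} separator of $\sigma_j$, then $\sigma_j$ and $\sigma_i$ take adjacent \emph{values} (i.e. $|\sigma_j - \sigma_i|=1$) while being far apart in position. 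I would translate both hypotheses into these explicit positional/value-adjacency constraints.

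The key observation to extract is the following asymmetry. If $\sigma_i$ separates $\sigma_j$ vertically, then $i$ and $j$ are adjacent positions ($|i-j|=1$); if $\sigma_i$ separates $\sigma_j$ horizontally, then $\sigma_i$ and $\sigma_j$ are adjacent values ($|\sigma_i-\sigma_j|=1$). Symmetrically, the hypothesis that $\sigma_j$ separates $\sigma_i$ forces: vertically, $|i-j|=1$; horizontally, $|\sigma_i-\sigma_j|=1$. So each of the two separation relations pins down one of the two quantities $|i-j|$ or $|\sigma_i-\sigma_j|$ to equal $1$. The plan is then a short case analysis on the four type-combinations. If both are vertical or both are horizontal we are done immediately. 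The work is to rule out the two mixed cases.

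For the mixed case, suppose $\sigma_i$ is a vertical separator of $\sigma_j$ and $\sigma_j$ is a horizontal separator of $\sigma_i$. The vertical condition gives $|i-j|=1$, say $j=i-1$, with $\sigma=[\ldots,\sigma_{i-1},\sigma_i,\sigma_{i+1},\ldots]$ and $|\sigma_{i-1}-\sigma_{i+1}|=1$. The horizontal condition applied to $\sigma_j=\sigma_{i-1}$ separating $\sigma_i$ requires $|\sigma_{i-1}-\sigma_i|=1$ together with $\sigma_{i-1}$ lying between two \emph{positionally consecutive} entries of values $\sigma_i\pm1,\sigma_i\mp1$. But $|\sigma_{i-1}-\sigma_i|=1$ says the positions $i-1$ and $i$ carry adjacent values, which directly contradicts $\sigma\in K_n$ (recall $\sigma$ is a king permutation, so no two adjacent positions may hold adjacent values, by the remark that $\pi\in K_n$ iff it has no $2$-block). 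The symmetric mixed case—$\sigma_i$ horizontal and $\sigma_j$ vertical—is handled by the same contradiction after swapping the roles of $i$ and $j$, or by invoking Remark \ref{obs on Sep}(3) to pass to $\sigma^{-1}$, under which vertical and horizontal separators interchange, reducing it to the case already treated.

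I expect the main obstacle to be bookkeeping rather than conceptual: carefully writing out, from Definition \ref{def separate}, exactly which entry plays the role of the separated element $\sigma_j$ in each of the horizontal sub-patterns (the two orderings $[\ldots,a,\ldots,a\pm1,a\mp1,\ldots]$ and its reverse), and confirming that in every mixed sub-case the combination of "adjacent positions" from one relation and "adjacent values" from the other produces a $2$-block in $\sigma$ itself, contradicting $\sigma\in K_n$. Handling the reversal/inverse symmetries cleanly—so as not to enumerate all orientations by brute force—will keep the argument short; Remark \ref{obs on Sep}(3)--(4) is the right tool for collapsing these symmetric variants.
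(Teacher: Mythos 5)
Your proposal is correct and follows essentially the same route as the paper: in the mixed case, the vertical separation relation forces $|i-j|=1$, the horizontal one forces $|\sigma_i-\sigma_j|=1$, and together these produce a $2$-block in $\sigma$, contradicting $\sigma\in K_n$. This is exactly the paper's argument (stated there with a WLOG on which separator is vertical), so no further comparison is needed.
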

\begin{proof}
%Assume to the contrary that $\sigma_i$ and $\sigma_j$ are separators of different types. Without loss of generality, $\sigma_i$ is a vertical separator, so that it separates $\sigma_j$ from $\sigma_k$. Then $j,i,k$ are subsequent numbers. In other words, 

%$\sigma = \left[  \begin{array}{cccccclclcc}
%1 & \cdots & j & i & k & \cdots\\
%\sigma_1 & \cdots & a & b & a\pm 1 & \cdots \\
%\end{array}\right]. $\\

%Now, if $\sigma_j$ is a horizontal separator which separates $\sigma_i$ from $\sigma_m$ then we must have $m=k$ and $\sigma_k=b \pm 2$ ,and thus $\sigma_j=b \pm 1$, which  contradicts the fact that $\sigma \in K_n$. 
Assume to the contrary that $\sigma_i$ and $\sigma_j$ are separators of
different types. Without loss of generality, $\sigma_i$ is a vertical separator. In this case we must have $j=i \pm 1$ and $|\sigma_{i+1}-\sigma_{i-1}|=1$. The case $j=i+1$ is 

$\sigma = \left[  \begin{array}{cccccclclcc}
1 & \cdots & i-1 & i & j=i+1 & \cdots\\
\sigma_1 & \cdots & a & b & a\pm 1 & \cdots \\
\end{array}\right]. $\\

Now, if $\sigma_j$ is a horizontal separator, then it separates $\sigma_j+1$ from $\sigma_j-1$. Therefore $\sigma_i=\sigma_j \pm 1$ which  contradicts the fact that $\sigma \in K_n$. 
\end{proof}

We observe now that if we omit a vertical separator, (separating $a$ from $a+1$) from a permutation $\sigma \in S_n$ and then we omit either of the elements of the resulted new block, or we first omit either of the set $\{a,a+1\}$ and then we remove the separator, the result is the same permutation. 
The following picture depicts this situation where the separator is bolded, the numbers that it separates are in red, and the elements written next to the edges are the ones we remove:

\begin{tikzpicture}
        \tikzstyle{every node} = [rectangle]
        \node (7426153) at (0,0) {$[742{\textcolor{red}6}{\bf 1}{\textcolor{red}5}3]$};
        \node at (-1.25,-1) {1};
        \node (631542) at (-2,-2) {$[631{\textcolor{red}{54}}2]$};
        \node at (0.25,-1) {5};
        \node (642513) at (0,-2) {$[642\textcolor{red}{5}{\bf 1}3]$};
        \node (642153) at (2,-2) {$[6421{\textcolor{red}{5}}3]$};
         \node at (-1.85,-3) {4};
         \node at (-1.55,-3) {or};
         \node at (-1.25,-3) {5};
        \node  (53142) at (0,-4) {$[531\textcolor{red}{4}2]$}; 
        \node at (0.25,-3) {1};
        \node at (1.25,-3) {1};
        \node at (1.25,-1) {6};
        
        \foreach \from/\to in {7426153/631542, 7426153/642513,642513/53142,631542/53142,7426153/642153,642153/53142}
            \draw[->] (\from) -- (\to);
    \end{tikzpicture}

The horizontal separators produce a similar phenomenon which is depicted in the following picture:

 \begin{tikzpicture}
        \tikzstyle{every node} = [rectangle]
        \node (7426153) at (0,0) {$[7{\bf 4}261\textcolor{red}{53}]$};
        \node at (-1.25,-1) {4};
        \node (625143) at (-2,-2) {$[6251\textcolor{red} {43}]$};
        \node at (-1.25,-3) {3};
        \node at (-1.65,-3) {or};
        \node at (-1.95,-3) {4};
        \node (52413) at (0,-4) {$[5241\textcolor{red}3]$};
        \node at (1.25,-3) {4};
        \node at (1.25,-1) {5};
        \node (642513) at (2,-2) {$[6{\bf4} 251 {\textcolor{red}3}]$};
        \node at (0.25,-1) {3};
        \node (632514) at (0,-2) {$[6{\bf3} 251 {\textcolor{red}4}]$};
        \node at (0.25,-3) {3};

        \foreach \from/\to in {7426153/625143,625143/52413,7426153/632514,7426153/642513,632514/52413,642513/52413}
            \draw[->] (\from) -- (\to);
    \end{tikzpicture}

 Both claims are expressed formally in the following :

\begin{observation}\label{same block}
 
Let $\sigma \in S_n$, and assume that $\sigma_i$ separates $\sigma_j$ from $\sigma_k$ in $\sigma$ and that  $i>j$. Then:

$$\nabla_j(\nabla_i(\sigma))=\nabla_i(\nabla_k(\sigma))\,\textrm{    if }\,j<i<k$$
and 

$$\nabla_j(\nabla_i(\sigma))=\nabla_k(\nabla_i(\sigma))\textrm{    if }  i>j>k$$
 (By observation \ref{omit the bigger first} it is sufficient to consider only  the case $i>j$.)
\end{observation}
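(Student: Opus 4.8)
The plan is to read both displayed identities as the single assertion that two different two-step deletion sequences land on the same permutation, and to explain this coincidence through the $2$-block that is created the moment the separator $\sigma_i$ is removed. Since the parenthetical remark already reduces us to $i>j$, I would split according to the type of $\sigma_i$ as a separator. The key preliminary observation is that the two positional regimes correspond \emph{exactly} to the two separator types: if $j<i<k$ then $\sigma_j$ and $\sigma_k$ sit on opposite sides of position $i$, which by Definition~\ref{def separate} forces a vertical separator with $j=i-1$ and $k=i+1$; if $i>j>k$ then $\sigma_j,\sigma_k$ must occupy adjacent positions to the left of $i$, which forces a horizontal separator with $j=k+1$. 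I would record this dichotomy first, since it also certifies that the case analysis is exhaustive.

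For the vertical case ($j<i<k$, hence $j=i-1$, $k=i+1$) I would first delete the separator, setting $\tau=\nabla_i(\sigma)$. The point is that $\sigma_{i-1}$ and $\sigma_{i+1}$ differ by $1$ in value, and since $\sigma_i$ cannot lie strictly between two consecutive integers, removing $\sigma_i$ leaves them as consecutive values occupying the now-adjacent positions $i-1,i$ of $\tau$; thus $\tau$ has a $2$-block there. Observation~\ref{2-block} then gives $\nabla_{i-1}(\tau)=\nabla_i(\tau)$, i.e. $\nabla_{j}(\nabla_i(\sigma))=\nabla_i(\nabla_i(\sigma))$. Finally Observation~\ref{omit the bigger first}, applied to the indices $i<i+1$, yields $\nabla_i(\nabla_{i+1}(\sigma))=\nabla_i(\nabla_i(\sigma))$, and chaining the two equalities produces the claimed $\nabla_j(\nabla_i(\sigma))=\nabla_i(\nabla_k(\sigma))$.

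The horizontal case ($i>j>k$, hence $j=k+1$) is more direct. Again set $\tau=\nabla_i(\sigma)$; here $\sigma_j$ and $\sigma_k$ take the values $\sigma_i\pm1$ and sit at the adjacent positions $k,k+1$, which are unaffected by deleting the larger position $i$. Removing $\sigma_i$ makes these two values consecutive, so positions $k$ and $j=k+1$ of $\tau$ form a $2$-block, and Observation~\ref{2-block} immediately gives $\nabla_k(\tau)=\nabla_j(\tau)$, which is precisely $\nabla_j(\nabla_i(\sigma))=\nabla_k(\nabla_i(\sigma))$.

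The steps I expect to demand the most care are bookkeeping rather than conceptual: verifying that the separator's value is never squeezed between the two separated values (so that standardization genuinely keeps them consecutive), and tracking how position indices are relabelled after the first deletion so that the $2$-block lands exactly where Observation~\ref{2-block} applies. The only genuinely substantive point is the opening dichotomy—that $j<i<k$ and $i>j>k$ force the separator to be vertical and horizontal respectively; once that is in place, both identities collapse to a single application of Observation~\ref{2-block}, with Observation~\ref{omit the bigger first} supplying the small index shift needed in the vertical case.
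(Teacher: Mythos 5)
Your proposal is correct and follows essentially the same route as the paper: the paper offers no formal proof of this observation, only the preceding informal discussion and diagrams, whose content is exactly your argument that deleting the separator creates a $2$-block, after which Observation~\ref{2-block} (plus the commutation rule of Observation~\ref{omit the bigger first} for the index shift in the vertical case) finishes both identities. Your opening dichotomy matching the positional regimes $j<i<k$ and $i>j>k$ to vertical and horizontal separators is the same reading the paper intends, so nothing is missing.
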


In \cite{AD}, it is proven that the poset of simple permutations is dense in the sense that for any two simple permutations $\sigma \prec \pi$ there exists a chain of simple permutations $\sigma^0=\sigma \prec \sigma^1 \cdots \prec \sigma^k=\pi$ such that $|\sigma^i|-|\sigma^{i-1}| \in \{1,2\}$. 
In our case, we prove in Theorem \ref{1 or 3} that for any two king permutations, such a chain exists with the stipulation that $|\sigma^i|-|\sigma^{i-1}| \in \{1,3\}$. 
The first step in this direction is the following:

\begin{proposition}

\label{grandson implies son}
Let $\sigma \in K_n$ with $n > 4$, and let $\pi \in K_{n-2}$ be such that $\pi \prec \sigma$. Then there exists $\tau \in K_{n-1}$ such that $\pi \prec \tau \prec \sigma$. 
\end{proposition}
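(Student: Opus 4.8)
The plan is to fix a representation $\pi = \nabla_j(\nabla_i(\sigma))$ with $1 \le j < i \le n$ witnessing $\pi \prec \sigma$, and to hunt for a single deletion from $\sigma$ that is simultaneously a king and still contains $\pi$. If either $\nabla_i(\sigma) \in K_{n-1}$ or $\nabla_j(\sigma) \in K_{n-1}$, then that permutation is the desired $\tau$: it is a king obtained from $\sigma$ by one deletion, so $\tau \prec \sigma$, and deleting one further entry recovers $\pi$, so $\pi \prec \tau$. Hence I may restrict attention to the hard case in which neither $\nabla_i(\sigma)$ nor $\nabla_j(\sigma)$ is a king.

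In this hard case Lemma \ref{i and j} applies (its hypotheses hold since $\pi = \nabla_j(\nabla_i(\sigma)) \in K_{n-2}$ while both single deletions fail), showing that $\sigma_i$ separates $\sigma_j$ from some element and $\sigma_j$ separates $\sigma_i$ from some element; Lemma \ref{same type} then forces $\sigma_i$ and $\sigma_j$ to be separators of the same type. Since passing to the inverse preserves membership in $\mathcal{K}$ and the containment order while interchanging the two kinds of separator (Remark \ref{obs on Sep}), I may assume both are vertical. A vertical separator separates its two positional neighbours, so mutual vertical separation of $\sigma_i, \sigma_j$ with $j < i$ forces $j = i-1$: concretely $\sigma_i$ separates $\sigma_{i-1}$ from $\sigma_{i+1}$ (so $|\sigma_{i-1}-\sigma_{i+1}|=1$), while $\sigma_{i-1}$ separates $\sigma_{i-2}$ from $\sigma_i$.

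The heart of the argument is a marching procedure that slides the pair of deleted positions rightward while always representing the same $\pi$. Because $\sigma_i$ separates $\sigma_{i-1}$ from $\sigma_{i+1}$, Observation \ref{same block} (via the block-collapse of Observation \ref{2-block}) gives $\pi = \nabla_{i-1}(\nabla_i(\sigma)) = \nabla_i(\nabla_{i+1}(\sigma))$, so $\pi$ is also obtained by deleting positions $i$ and $i+1$. If $\nabla_{i+1}(\sigma)$ is a king I am done with $\tau = \nabla_{i+1}(\sigma)$. Otherwise $\sigma_{i+1}$ is a separator, and I reapply Lemma \ref{i and j} and Lemma \ref{same type} to the pair $(i,i+1)$ — whose two single deletions are non-kings while $\nabla_i(\nabla_{i+1}(\sigma)) = \pi \in K_{n-2}$ — to deduce that $\sigma_i$ and $\sigma_{i+1}$ are mutual separators, necessarily vertical since $\sigma_i$ is. Thus $\sigma_{i+1}$ separates $\sigma_i$ from $\sigma_{i+2}$, and a further application of Observation \ref{same block} yields $\pi = \nabla_{i+1}(\nabla_{i+2}(\sigma))$. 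Iterating produces a chain of pairs $(i,i+1),(i+1,i+2),\dots$, each representing $\pi$, in which every newly tested entry is forced to be a vertical separator for as long as the procedure continues.

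Termination is the crux, and follows from finiteness together with Remark \ref{obs on Sep}: the tested position strictly increases at each step, and it cannot reach a stage demanding that $\sigma_n$ be a vertical separator, because $\sigma_n$ can only be a horizontal separator. Therefore the procedure must halt at some position $c+1 \le n$ whose entry is not a separator; then $\tau = \nabla_{c+1}(\sigma) \in K_{n-1}$, and since $\pi = \nabla_c(\nabla_{c+1}(\sigma)) = \nabla_c(\tau)$ we obtain $\pi \prec \tau \prec \sigma$, as required. The step I expect to be most delicate is verifying that the separator type is preserved along the march — so that the vertical structure, and hence the positional monotonicity driving termination, persists — and that each re-routing through Observation \ref{same block} genuinely leaves $\pi$ unchanged; once these invariants are established, the boundary obstruction from Remark \ref{obs on Sep} makes termination immediate.
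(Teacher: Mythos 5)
Your proof is correct, and it runs on the same machinery as the paper's---Lemma \ref{i and j}, Lemma \ref{same type}, and Observation \ref{same block}, driving a rightward-monotone movement of the deleted pair---but it is organized in a genuinely different way. The paper argues by contradiction: it collects \emph{all} representing pairs $G=\{(i,j) \mid \nabla_j(\nabla_i(\sigma))=\pi,\ i>j\}$, takes $i_0=\max\{i \mid (i,j)\in G\}$, and must then split according to whether $\sigma_{i_0}$ is a vertical or a horizontal separator; the horizontal case is the laborious one, requiring the auxiliary values $a-1$ and $a+3$ and two further applications of Lemma \ref{i and j} before a pair exceeding $i_0$ emerges. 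You instead kill the horizontal case once and for all at the outset, using the facts that $\mathcal{K}$ is closed under inversion, that containment is preserved by inversion, and that inversion interchanges the two separator types (Remark \ref{obs on Sep}); after that, your explicit march $(i-1,i)\to(i,i+1)\to\cdots$, terminating at the right boundary because $\sigma_n$ can never be a vertical separator, is the constructive counterpart of the paper's maximality contradiction. The inverse trick buys a real simplification (the paper's Case 2 disappears), at the modest cost of appealing to the symmetry of the whole poset, which the paper's self-contained, single-permutation treatment avoids. One small point to tighten: when you reapply the lemmas to the pair $(i,i+1)$, the separation type is vertical not ``because $\sigma_i$ is vertical'' (an entry can be a separator of both types simultaneously, so this alone decides nothing), but because $\sigma_i$ cannot horizontally separate $\sigma_{i+1}$ at all---that would force $|\sigma_i-\sigma_{i+1}|=1$, impossible for adjacent positions in a king---and Lemma \ref{same type} then makes both separations vertical; this is exactly the invariant that keeps your march going and lets the boundary obstruction of Remark \ref{obs on Sep} terminate it.
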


\begin{proof}
Let $\sigma=[\sigma_1,\cdots, \sigma_n] \in K_n$, and $\pi \in K_{n-2}$ be such that $\pi \prec \sigma$.

Hence there are some $i>j$ such that $\pi=\nabla_j(\nabla_i(\sigma))$. 

Let $$G=\{(i,j) \mid \nabla_j(\nabla_i(\sigma))=\pi,i>j \}.$$  

For example, let $\sigma=[361425]$ and $\pi=[2413]$. Then $G=\{(4,3),(5,4),(6,5)\}$.

Assume to the contrary that there is no $\tau \in K_{n-1}$ such that $\pi \prec \tau \prec \sigma$. 

According to our assumption, for each $(i,j)\in G$, both $\nabla_i(\sigma)$ 
and $\nabla_j(\sigma)$ are not in $K_{n-1}$, which means that each of them contains a block of order $2$. 
Moreover, by Lemma \ref{i and j}, $\sigma_i$ separates $\sigma_j$ from some element, and similarly $\sigma_j$ separates $\sigma_i$ from some element and thus, by Equation \ref{same type}, both separators are of the same type. 

Let $i_0=\max \{i \mid (i,j)\in G\}$, and assume that $\sigma_{i_0}$ separates $\sigma_p$ from $\sigma_q$. We have two cases:

\begin{enumerate}
\item The separator $\sigma_{i_0}$ is vertical. In this case we have without loss of generality that $p=i_0-1$ and $q=i_0+1$, and $|\sigma_p -\sigma_q|=1$ so that 
$$\sigma = \left[  \begin{array}{cccccclclcc}
1 & \cdots & p & i_0 & q & \cdots\\
\sigma_1 & \cdots & a & b & a \pm 1 & \cdots \\
\end{array}\right], $$\\
and since the omitting of $\sigma_{i_0}$ creates a block, which we must get rid of by removing $\sigma_p$ or $\sigma_q$ in the passage to $\pi \in K_{n-2}$ and $p<i_0<q$, we must have by \ref{same block}, 
$$\nabla_{i_0}(\nabla_q(\sigma))=\nabla_p(\nabla_{i_0}(\sigma))=\pi.$$ 
This means that $(q,i_0) \in G$ which contradicts the maximality of $i_0$. 

\item The separator $\sigma_{i_0}$ is horizontal. Let $\sigma_{i_0}=a+1$, so that 
 w.l.o.g. $\sigma_q=a+2$ and $\sigma_p=a$, i.e., 

$$\sigma = \left[  \begin{array}{cccccclclcc}
1 & \cdots & p & q & \cdots & i_0 & \cdots \\
\sigma_1 & \cdots & a & a+2  & \cdots & a+1 & \cdots \\
\end{array}\right].$$\\

The removal of $\sigma_{i_0}$ creates a block, which we must get rid of by removing $\sigma_p$ or $\sigma_q$ in the passage to $\pi \in K_{n-2}$, thus  $\nabla_p(\nabla_{i_0}(\sigma))\in K_{n-2}$ but $\nabla_{i_0}(\sigma)\notin K_{n-1}$ and $\nabla_p(\pi)\notin K_{n-1}$. As a result, by Lemma \ref{i and j}, we have that $\sigma_{i_0}$ separates $\sigma_p$ from some element and 
 $\sigma_p$ separates $\sigma_{i_0}$ from some element, call it $\sigma_k$. In the same manner, $\sigma_q$ separates 
 $\sigma_{i_0}$ from some $\sigma_m$. 
 
 %Since $\sigma_{i_0}$ separates $\sigma_p$ from $\sigma_q$, the removal of $\sigma_{i_0}$ creates the block $a,a+1$ in $\nabla_{i_0}(\sigma)$. In the passage to $\pi$, one of the elements $\sigma_p$ or $\sigma_q$ must be removed and it does not matter which of them. This means that $\nabla_{p}(\nabla_{i_0})(\sigma)=\pi$. By 3.2, we have now that $\nabla_{i_0-1}(\nabla_{p})(\sigma)=\pi$ which means that $p$ separates $i_0$ by some $\sigma_k$. 
 
 Now, by \ref{same type}, $\sigma_p$ must be a horizontal separator, so that $\sigma_k=a-1$ and $k \in \{i_0-1,i_0+1\}$ and $ \sigma_q$ must be also a horizontal separator so that
%Now, by an argument similar to the one we made for $\sigma_p$, we conclude that $\sigma_q$ must be a type II separator of $\sigma_{i_0}$ from some $\sigma_m$.  
$\sigma_m=a+3$ and $m\in \{i_0-1,i_0+1\}$ with $m \neq k$. 

If $m=i_0+1$, then

$$\sigma = \left[  \begin{array}{cccccclclcc}
1 & \cdots & p & q & \cdots & i_0 & i_0+1 & \cdots \\
\sigma_1 & \cdots & a & a+2  & \cdots & a+1 & a+3 & \cdots \\
\end{array}\right]$$\\

 and by \ref{same block}, $\nabla_q(\nabla_{i_0}(\sigma))=\nabla_q(\nabla_{i_0+1}(\sigma))=\pi$. 
 
 Similarly, if $k=i_0+1$, then $\nabla_p(\nabla_{i_0}(\sigma))=\nabla_p(\nabla_{i_0+1}(\sigma))=\pi$.

In any case, we  contradicted the maximality of $i_0$.

%By \ref{same block}, we also have %$\nabla_{i_0}(\nabla_{p})(\sigma)=\pi$ which means that

%and since by \ref{same type}, $\sigma_p$ is of type II, %$\sigma_p$ separates $\sigma_{i_0}$ from some element.   %must have by the maximality of $i_0$ that $k=i_0-1$ and %$\sigma_k=a-1$, so $\sigma=[\cdots %,a,a+2,\cdots,a-1,a+1,\cdots]$.  
 
 %Remember that $\pi=\nabla_q(\nabla_{i_0}(\sigma)) \in %K_{n-2}$ but $\tau =\nabla_q(\sigma) \notin K_{n-1}$ we %conclude that $\sigma_q$ separates $\sigma_{i_0}$ from %some $\sigma_{m}$ for $m \neq k$ and since $k=i_0-1$ we %must have $m=i_0+1$. By Observation \ref{same block}, we %have $\pi=\nabla_q(\nabla_{i_0}(\sigma)) %=\nabla_q(\nabla_m(\sigma)) $ and therefore $(m,q)\in G$ %which contradicts the maximality of $i_0$. 
\end{enumerate}
\end{proof}

\begin{theorem}

\label{1 or 3}

Let $\sigma,\pi$ be king permutations such that $\pi \prec \sigma$ and $|\sigma|-|\pi|>3$. Then there exists a king permutation $\tau$ such that $\pi \prec \tau \prec \sigma$ and $|\sigma| -|\tau| \in \{1,3\}$. 
\end{theorem}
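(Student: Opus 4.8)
\emph{Plan.} The key simplification I would exploit first is that it suffices to produce \emph{some} king $\tau'$ with $\pi \preceq \tau' \prec \sigma$ and $|\sigma|-|\tau'| \in \{1,2,3\}$. Indeed, if $|\sigma|-|\tau'| \in \{1,3\}$ we are done, and $\pi \prec \tau'$ automatically since $|\tau'| > |\pi|$ (because $|\sigma|-|\pi| > 3$). If instead $|\sigma|-|\tau'| = 2$, then $\tau' \in K_{n-2}$ with $\tau' \prec \sigma$ and $n = |\sigma| > 4$, so Proposition \ref{grandson implies son} supplies a king $\tau$ with $\tau' \prec \tau \prec \sigma$, whence $|\sigma|-|\tau| = 1$ and $\pi \prec \tau \prec \sigma$. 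Thus the forbidden ``distance two'' is automatically upgraded to ``distance one,'' and the whole theorem reduces to the single task of finding a king at distance at most $3$ below $\sigma$ that still contains $\pi$. This also explains the dichotomy $\{1,3\}$ rather than $\{1,2\}$: a king two steps below a prince-less $\sigma$ can never be the nearest one, since it would manufacture a prince via Proposition \ref{grandson implies son}.

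To find such a king I would fix an embedding of $\pi$ in $\sigma$ and call a position of $\sigma$ \emph{free} if it is not used by this embedding; since $|\sigma|-|\pi| > 3$ there are at least four free positions, and deleting any subset of free positions leaves a permutation that still contains $\pi$. If some free entry $\sigma_i$ is \emph{not} a separator, then by Definition \ref{def separate} deleting it creates no new $2$-block, so $\nabla_i(\sigma)$ is a king containing $\pi$ and we are done at distance $1$. Hence the crucial case is the one in which \emph{every} free entry is a separator.

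In that case the plan is to delete one free separator and then clear the block it produces. Using the reverse and inverse symmetries recorded in Remark \ref{obs on Sep}, I may assume the chosen free entry $\sigma_i$ is a vertical separator of $a$ and $a+1$, so that $\nabla_i(\sigma)$ carries the single new $2$-block $\{a,a+1\}$ in adjacent positions. By Observation \ref{2-block} this block is destroyed by deleting either $a$ or $a+1$, and Observation \ref{same block} pins down the resulting length-$(n-2)$ permutation unambiguously. I would then show that, deleting at most two further \emph{free} entries chosen to resolve this block and any secondary block it spawns, one reaches a king; here Lemma \ref{i and j} and Lemma \ref{same type} control the type and location of every separator that can arise, and a maximal-index argument in the spirit of Proposition \ref{grandson implies son} caps the number of required deletions at three.

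The main obstacle is precisely this last point: guaranteeing that clearing the created block(s) costs at most two \emph{additional} deletions \emph{and} that every deleted entry can be taken free, so that the fixed embedding of $\pi$ survives. The delicate subcase is when the block elements $a$ and $a+1$ themselves belong to the embedding of $\pi$; there one must either re-choose the embedding or push through the extremal analysis to show that a prince-less, grandson-less $\sigma$ nonetheless admits a king exactly three levels down via free deletions. Establishing this ``at most three'' bound---equivalently, that the first king strictly below $\sigma$ that contains $\pi$ is never more than three steps away---is the technical heart of the argument, and I expect it to consume the bulk of the work.
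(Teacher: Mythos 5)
Your opening reduction is correct and matches the paper's own mechanism: by Proposition \ref{grandson implies son}, a $\pi$-containing king at distance $2$ below $\sigma$ upgrades to one at distance $1$, so it suffices to find a $\pi$-containing king within distance $3$ of $\sigma$. The easy case is also right: a free non-separator entry can be deleted to give a king at distance $1$. But the hard case --- every free entry is a separator --- is not proved; you lay out a plan and then explicitly concede the ``delicate subcase'' in which the new $2$-block $\{a,a+1\}$ created by deleting your free separator lies inside the chosen embedding of $\pi$. That subcase is not a corner to be smoothed later; it is the entire content of the theorem, and it is exactly where a fixed-embedding strategy breaks down. The paper avoids this trap by never fixing an embedding: it works with the set $F$ of \emph{all} removal sequences taking $\sigma$ to $\pi$, and chooses $\ell$ maximal among all elements occurring in any member of $F$. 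This buys two things your setup cannot deliver. First, any $2$-block created on the way to $\pi$ must eventually be destroyed (because $\pi$ is a king), so one of its two elements occurs in some removal sequence; by Observation \ref{2-block}, deleting either element of a $2$-block yields the same permutation, hence containment of $\pi$ survives the block-clearing deletion automatically --- no freeness hypothesis is needed, and the question of whether $a$ or $a+1$ is used by an embedding never arises. Second, maximality of $\ell$ over $F$ is what forces $\ell$ to be a vertical and not a horizontal separator (a horizontal $\ell$ would put $\ell+1$ into some removal sequence, contradicting maximality) and what forces $\ell>a+2$; your arbitrary choice of a free separator has neither property, and your appeal to ``a maximal-index argument in the spirit of Proposition \ref{grandson implies son}'' does not substitute for them.

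Separately, your claim that clearing the block ``costs at most two additional deletions'' is asserted, not proved. Deleting an element of the created block can itself create a new $2$-block, and controlling this cascade is where all the work lies: in the paper it is the case analysis on whether $a+1$ is a vertical, horizontal, or two-type separator in $\delta_1=\nabla^*_{\ell}(\sigma)$, with further subcases on $a$, each branch ending either in an explicit non-separator of $\sigma$ whose deletion gives a king at distance $1$, or in the explicit king $\nabla^*_{a+1}\nabla^*_{a+2}\nabla^*_{a+3}(\sigma)$ at distance $3$. Until you supply an argument of this kind that simultaneously terminates within three deletions and preserves containment of $\pi$ at every step, the proposal is a program rather than a proof.
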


\begin{proof}

Let $\sigma=[\sigma_1,\dots,\sigma_n]$. 
In order to pass from $\sigma$ to $\pi$, we have to remove $d=|\sigma|-|\pi|$ different elements from $\sigma$.  As there is more then one possibility to choose these elements, let $F$ be the set of all sequences of $d$ elements $(l_1,\dots,l_d)$ such that the removal of them  from $\sigma$ achieves $\pi$.  
Note that in this proof we chose to work with the elements of the permutation $\sigma$ rather than with their locations as in previous proofs. This choice is justified by clarity considerations. 
Here is an example of using the sequence $(1,2,7,8)$ for passing from $\sigma=[571386249]$ to $\pi=[31425]$.  
We start by omitting $1$ from $\sigma$ in order to get $\delta_1=\nabla_1^*(\sigma)=[46275138]$. The second step will be to omit the element $2$ from $\sigma$, but since $2>1$, we actually omit the element $1$ (which fills in for $2$) from $\delta_1$ to get
 $\delta_2=[3516427]$. 
 Now, removing $7$ from $\sigma$ amounts to omitting $5$ from $\delta_2$, so we get $\delta_3=[315426]$. 
 The last step is to remove $8$ from $\sigma$, and this is done by omitting $5$ from $\delta_3$ so we have $\delta_4=[31425]=\pi$. 
 
Here is another example:
Again, let $\sigma=[571386249]$ and $\pi=[31425]$, but this time the procedure will be much easier since we use the descending sequence $(8,7,2,1)$. 
We have $\delta_1=[57136248]$, $\delta_2=[5136247],\delta_3=[412536]$ and $\delta_4=[31425]=\pi$. 

\bigskip
We are ready now to get to the proof. 
Let $f=(l_1,\dots,l_d)\in F$ be chosen such that $\ell=l_1$ is maximal among all the elements appearing in the sequences of $F$. 
If $\ell$ is not a separator then by omitting it we get a king permutation $\tau$ such that $|\sigma|-|\tau|=1$ and $\pi \prec \tau \prec \sigma$ and we are done. 
Otherwise, $\ell$ is a separator. We claim that it is a vertical separator but not a horizontal separator. 

Indeed, if $\ell$ is a horizontal separator then the consecutive subsequence $\ell-1,\ell+1$ (or its reverse) appears in $\sigma$. After omitting $\ell$, we get a block which must be removed in our way to $\pi$ since $\pi$ is a king permutation. Hence there is some $f \in F$ which contains $\ell+1$ and this contradicts the maximality of $\ell$.

Back to the proof, since $\ell$ is a vertical separator, there is some element $a$ such that $\sigma$ contains the consecutive sequence $a,\ell,a+1$ or its reverse. By the maximality of $\ell$ and the fact that $\sigma \in K_n$, we get that $\ell>a+2$ . Let $\delta_1=\nabla_\ell^*(\sigma)$ be the permutation obtained from $\sigma$ by omitting the element $\ell$. Without loss of generality, this permutation contains the $2-$block $a,a+1$, and this is \textbf{the only $2$-block} in $\delta_1$. 
Now, let $\delta_2$ be the permutation obtained from $\delta_1$ by omitting the element $a+1$. If $a+1$ is not a separator in $\delta_1$, then by omitting it we get a permutation with no $2-$blocks (i.e., $\delta_2$ is a king permutation), so we have two options: either $\delta_2$ is a king permutation or $a+1$ is a separator in $\delta_1$.

If $\delta_2$ is a king permutation then by Proposition \ref{grandson implies son} there is some $\tau \in K_{n-1}$ such that $\pi \prec \delta_2 \prec \tau \prec \sigma$ and we are done. 
Otherwise, $a+1$ must be a separator in 
$\delta_1$, 
%in $\delta_1$  we get a subsequence of the form $a,a+1,a-1$ or $a+2,a,a+1$, %i.e., $a+1$ must be a separator in $\delta_1$ 
and we have one of the following three options:
\begin{itemize}
    
     \item The element $a+1$ is a vertical and horizontal separator in $\delta_1$.
In this case, we have the following situation:

\begin{tikzpicture}
        \tikzstyle{every node} = [rectangle]
        \node (sigma) at (0,0) {$\sigma=\cdots {\bf a+2,a,\ell,a+1,a-1} \cdots$};
        \node (delta1) at (0,-1) {$\delta_1=\nabla^*_{\ell}(\sigma)=\cdots {\bf a+2,a,a+1,a-1} \cdots$};
       
    \foreach \from/\to in {sigma/delta1}
            \draw[->] (\from) -- (\to);
    \end{tikzpicture}
    
    Now, $a-1$ is not a vertical separator in $\sigma$ due to the location of $a+2$ and is not a horizontal separators since $\ell \neq a-2$. This means that 
    $\nabla^*_{a-1}(\sigma)\in K_{n-1}$ and we are done.

    \item The element $a+1$ is a vertical but not horizontal separator in 
    ${\bf \delta_1}$. In this case, we have the following situation (note that $x \neq a+2$, since otherwise we are back in the former case):
    
    \begin{tikzpicture}
        \tikzstyle{every node} = [rectangle]
        \node (sigma) at (0,0) {$\sigma=\cdots ,{\bf a,\ell,a+1,a-1},\cdots$};
        \node (delta1) at (0,-1) {$\delta_1=\nabla^*_{\ell}(\sigma)=\cdots ,{\bf a,a+1,a-1 }\cdots$};
       
    \foreach \from/\to in {sigma/delta1}
            \draw[->] (\from) -- (\to);
    \end{tikzpicture}
    
      Now, it is clear that $a+1$ is not a separator  in $\sigma$. This means that 
    $\nabla^*_{a+1}(\sigma)\in K_{n-1}$ and we are done.

     \item The element $a+1$ is a horizontal but not vertical separator in 
    ${\bf \delta_1}$. 
    
    In this case, we have the following situation:
    
    \begin{tikzpicture}
        \tikzstyle{every node} = [rectangle]
        \node (sigma) at (0,0) {$\sigma=\cdots {\bf a+2,a,\ell,a+1}\cdots$};
        \node (delta1) at (0,-1) {$\delta_1=\nabla^*_{\ell}(\sigma)=\cdots {\bf a+2,a,a+1 }\cdots$};
    \foreach \from/\to in {sigma/delta1}
            \draw[->] (\from) -- (\to);
    \end{tikzpicture}
    
    Now, if $a$ is not a separator in $\sigma$, then $\nabla^*_a(\sigma) \in K_{n-1}$ and we  are done.
    Otherwise, we have two cases:
    \begin{enumerate}
  
        \item The element $a$ is a vertical (but not horizontal) separator in $\sigma$. 
         In this case we have $$\sigma=\cdots x,{\bf a+2,a,\ell=a+3,a+1},y \cdots.$$
        
        If $\ell = a+3$ gets removed to form $\pi$ (and we know that it does), then two of 
        $\{a,a+1,a+2\}$ must also be removed to form $\pi$, because $\pi$ is a king permutation. If $x$ or $y$ is $a+4$, then that would also have to be removed, for the same reason, but this contradicts the maximality of $\ell$. Thus $x,y \neq a+4$. If $x,y = a-1$, then that would also have to be removed, for the same reason, so $\pi \prec \nabla^*_{a-1}(\sigma) \prec \sigma$ and $\nabla^*_{a-1}(\sigma) \in K_{n-1}$. Otherwise, $\nabla^*_{a+1}\nabla^*_{a+2}\nabla^*_{a+3}(\sigma) \in K_{n-3}$

        %Note that the subsequence $a+2,a,\ell=a+3,a+1$ in $\sigma$ is order isomorphic to $3142$. Now, if $x= a+4$ or $y=a+4$ then omitting $a+3,a+2,a+1$ will force the omitting of $a+4$ in order to prevent a block. This is a contradiction to the maximality of $\ell=a+3$, and hence we can assume that $x,y \notin \{a+4\}$.
         
        %If $x$ and $y$ are both different from $a-1$ then by omitting $a+3$ ,$a+2$ and $a+1$ from $\sigma$, we get $\tau=\nabla^*_{a+1}\nabla^*_{a+2}\nabla^*_{a+3}(\sigma)\in K_{n-3}$ and we are done since $\pi \preceq \tau \prec \sigma$ and $|\sigma|-|\tau|=3$.     Otherwise,  $x=a-1$ or $y=a-1$, in which case $\nabla^*_{a+1}\nabla^*_{a+2}\nabla^*_{a+3}(\sigma)$ contains the         block $a-1,a$ or the block $a,a-1$ that we must get rid of in our way to $\pi$ and thus         $\pi \prec \nabla^*_{a-1}(\sigma) \prec \sigma$. It is easy to see that $a-1$ is not a separator in $\sigma$. This means that    $\nabla^*_{a-1}(\sigma)\in K_{n-1}$ and again we are done.
       
       \item The element $a$ is a horizontal separator (with or without being also a vertical separator) in $\sigma$. In this case we have:
       $\sigma=\cdots{\bf  a+2,a,\ell,a+1,a-1}\cdots$.
        
        (Note that if $\ell=a+3$ then $a$ is also a vertical separator). 
      Now,  it is clear that $\pi \prec \nabla^*_{a-1}(\sigma) \prec \sigma$ and $a-1$ is not a separator in $\sigma$. This means that 
  $\nabla^*_{a-1}(\sigma)\in K_{n-1}$ and we are done.
  \end{enumerate}

\end{itemize}
\end{proof}

As a corollary, we get one of our main results:
\begin{cor}\label{chain}
For each two king  permutations $\pi \prec \sigma$ there exists a chain of king permutations $\pi=\pi^0 \prec \pi^1 \cdots \prec \pi^k=\sigma$ such that $|\pi^i|-|\pi^{i-1}| \in \{1,3\}$. 
\end{cor}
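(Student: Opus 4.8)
The plan is to prove the statement by strong induction on the length difference $d = |\sigma| - |\pi| \geq 1$, peeling off one admissible step from the top of the chain at a time and invoking Theorem \ref{1 or 3} to guarantee that such a step exists whenever $d$ is large.

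First I would dispose of the small cases. When $d = 1$ the chain $\pi = \pi^0 \prec \pi^1 = \sigma$ consisting of a single step of size $1$ already works, and when $d = 3$ the chain $\pi \prec \sigma$ with a single step of size $3$ works as well, since $3 \in \{1,3\}$. The only awkward small case is $d = 2$, because $2 \notin \{1,3\}$ forces us to insert an intermediate king permutation; this is precisely what Proposition \ref{grandson implies son} provides, yielding $\tau \in K_{|\sigma|-1}$ with $\pi \prec \tau \prec \sigma$ and hence a chain with two steps of size $1$.

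For the inductive step I would assume $d > 3$ and that the claim holds for all pairs of king permutations whose length difference is strictly smaller than $d$. By Theorem \ref{1 or 3}, since $|\sigma| - |\pi| > 3$, there exists a king permutation $\tau$ with $\pi \prec \tau \prec \sigma$ and $|\sigma| - |\tau| \in \{1,3\}$. Then $|\tau| - |\pi| = d - (|\sigma| - |\tau|)$ lies in $\{d-1, d-3\}$, which is strictly smaller than $d$ and at least $1$ (because $\pi \prec \tau$ forces $|\tau| > |\pi|$). Applying the induction hypothesis to the pair $\pi \prec \tau$ produces a chain $\pi = \pi^0 \prec \cdots \prec \pi^{k-1} = \tau$ with all steps in $\{1,3\}$; appending $\sigma = \pi^k$ extends it to the desired chain from $\pi$ to $\sigma$, since the final step $|\sigma| - |\tau|$ also lies in $\{1,3\}$.

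The substance of the argument has already been carried out in Theorem \ref{1 or 3} and Proposition \ref{grandson implies son}; the corollary itself is a routine top-down induction, so I do not expect any genuine obstacle. The only point demanding care is remembering that a step of size $2$ cannot be realized directly and must be routed through Proposition \ref{grandson implies son} in the base case $d = 2$ (and, implicitly, that no step of size $2$ is ever produced in the inductive step, which is guaranteed by the $\{1,3\}$ conclusion of Theorem \ref{1 or 3}).
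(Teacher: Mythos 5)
Your proof is correct and matches the paper's (implicit) argument: the paper states this corollary as an immediate consequence of Theorem \ref{1 or 3}, handling the length-difference-$2$ case via Proposition \ref{grandson implies son} exactly as you do, and your strong induction is the routine bookkeeping that makes this precise. The only hypothesis worth checking is the requirement $n>4$ in Proposition \ref{grandson implies son}, which is automatic here: since $K_2=K_3=\emptyset$, any pair of king permutations with length difference $2$ has $|\sigma|\geq 6$.
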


 \subsection{Kings without princes}
 Recall that a {\it prince} of a permutation $\sigma \in K_n$ is a king permutation $\tau \in K_{n-1}$ such that $\tau \prec \sigma$. 
 
 We can formulate now the following:
 \begin{lemma}\label{strict 2 block implies a prince}
 If for $\sigma \in K_n$ there is $\pi \in S_{n-1}$ such that $\pi \prec \sigma$ and $\pi$ contains a single strict $2-$block then $\sigma$ has a prince. 
 \end{lemma}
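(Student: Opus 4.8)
The plan is to produce an explicit prince of $\sigma$ by exhibiting a single \emph{non-separator} element to delete. Since $\sigma \in K_n$ has no $2$-block, $\nabla_j(\sigma) \in K_{n-1}$ holds exactly when $\sigma_j$ is not a separator, so ``$\sigma$ has a prince'' is equivalent to ``$\sigma$ has a non-separator element.'' First I would record the reduction forced by the hypothesis: as $\pi \in S_{n-1}$ and $\pi \prec \sigma \in S_n$, we have $\pi = \nabla_i(\sigma)$ for some $i$, and since $\pi$ has a $2$-block while $\sigma$ has none, $\sigma_i$ is a separator whose removal creates the strict $2$-block. Because $K_n$ is closed under inverse and reverse (hence under the whole symmetry group they generate, which also contains the complement), and each of these symmetries maps blocks to blocks of the same size, preserving strictness, they preserve both ``having a prince'' and ``having a single-strict-$2$-block permutation below.'' I would use them to assume $\sigma_i$ is a \textbf{vertical} separator. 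Writing $a=\sigma_i$, this puts $\sigma$ locally in the form $[\ldots, w, b, a, b+1, z, \ldots]$ at positions $i-2,\dots,i+2$, with the unique strict $2$-block of $\pi$ equal to $\{b,b+1\}$; the king property gives $|a-b|\ge 2$ and $|a-(b+1)|\ge 2$, hence after the normalisation $a\ge b+3$.

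Next I would try the two natural candidates: deleting one of the block elements $b$ or $b+1$ from $\sigma$ (each gives a permutation $\prec\sigma$ automatically, so the only content is king-ness). The new value-adjacencies created by deleting $b$ (resp.\ $b+1$) occur only where the deletion merges positions or shifts values past the block's neighbours. Here the \emph{strictness} of the $2$-block is exactly what kills the dangerous value-side extensions: strictness says $w$ and $z$ are not value-adjacent to $\{b,b+1\}$ in $\pi$, so the collapsed block cannot re-form a $2$-block with an outer neighbour. A short computation then shows that deleting $b+1$ fails essentially only when the outer neighbour $z$ is value-adjacent to the \emph{separator} value $a$, and symmetrically deleting $b$ fails only when $w$ is value-adjacent to $a$. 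Hence unless both $w,z\in\{a-1,a+1\}$, one of the two deletions yields a king and we are done.

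The remaining stubborn case is $\{w,z\}=\{a-1,a+1\}$, which locks $\sigma$ into a ``zig-zag'' $[\ldots, a\pm1, b, a, b+1, a\mp1, \ldots]$. Here I would delete an \textbf{outer} neighbour, say $w=a+1$: since $a$ is large this does not create an internal $2$-block among the surviving cluster, and the only fresh adjacency is between $\sigma_{i-3}$ and $b$. If that is not a $2$-block we are done; otherwise $\sigma_{i-3}$ is value-adjacent to $b$, which forces $a+1$ itself to be a vertical separator and reproduces the same local pattern one step farther out. The point is that this propagation marches strictly toward the boundary of the one-line notation (and of the value range), so by finiteness it must halt, and it can only halt at an element that is \emph{not} a vertical separator. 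At that stage I would invoke Remark~\ref{obs on Sep}(2): the extreme values $1,n$ (and the extreme positions $\sigma_1,\sigma_n$) cannot be vertical separators, so the chain of forced vertical separators cannot sustain itself all the way to the boundary. The outermost element at which the pattern breaks is then a non-separator, giving the desired prince.

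The main obstacle is precisely this last step: controlling the propagation in the locked case and proving it terminates at a non-separator rather than cycling. This is where both hypotheses are indispensable -- \emph{strictness} suppresses the value-side extensions at each stage, while the \emph{uniqueness} of the strict $2$-block guarantees that no competing $2$-block appears off to the side to redirect or feed the march -- together with the finiteness/boundary constraint of Remark~\ref{obs on Sep}(2). I would take care to handle the mirror sub-possibility (deleting $z=a-1$ on the other side) and to verify, at each stage, that the secondary ``straddling'' adjacencies a deletion might create cannot actually occur under the forced large value of $a$, so that the \emph{only} obstruction feeding the induction is the single value-adjacency identified above.
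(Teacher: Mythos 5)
Your proposal is correct, but it takes a genuinely different route from the paper. The paper's proof contains no local analysis of $\sigma$ at all: it deletes one element of the strict $2$-block from $\pi$ itself, notes that strictness and uniqueness force the result $\tau$ to be a king in $K_{n-2}$ (the collapsed block cannot rejoin its positional neighbours, and there is no other block to worry about), and then applies Proposition~\ref{grandson implies son} to the pair $\tau \prec \sigma$ to produce the prince; all the heavy lifting is delegated to that proposition. You instead stay at level $n$ and exhibit a non-separator entry of $\sigma$ directly: the reduction ``prince $\Leftrightarrow$ some entry is not a separator'', the dihedral normalisation to a vertical separator with $a\geq b+3$, the computation that deleting $b+1$ (resp.\ $b$) fails only when $z$ (resp.\ $w$) is value-adjacent to $a$ --- strictness exactly ruling out the obstructions $w=b+2$ and $z=b-1$ --- and, in the locked case, the forced chain $\sigma_{i-3}=b-1$, $\sigma_{i-4}=a+2$, $\sigma_{i-5}=b-2,\dots$, which must break off because the positions decrease to the left edge and the forced values leave $[1,n]$; at the break the corresponding deletion creates no $2$-block and yields the prince. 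I verified the local computations and the propagation invariant (at every stage the value-merge adjacency is blocked by entries of the already-forced chain, so only the single position-merge adjacency can fail), so your argument does close; in effect you give a constructive, self-contained re-proof of exactly the special case of Proposition~\ref{grandson implies son} that the paper invokes. That is what each approach buys: the paper's proof is three lines modulo that proposition, while yours is independent of it and names the precise entry to delete. Two repairs are needed before yours stands as written: (i) you misquote Remark~\ref{obs on Sep} --- it says that $1$ and $n$ can \emph{only} be vertical separators, and that $\sigma_1$ and $\sigma_n$ can only be horizontal ones, not that the extreme values cannot be vertical separators; fortunately your termination argument needs only finiteness of positions and values, not that remark; and (ii) the propagation step you yourself flag as the main obstacle must be written out as a genuine induction, since the pattern that reappears ``one step farther out'' is only half-locked (its inner outer-neighbour is automatically value-adjacent to the new separator), and the entry deleted at the next stage is a block element of the new pattern rather than an outer neighbour.
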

 \begin{proof}
 Let $i \in \{1,\ldots ,n\}$ be such that $\pi=\nabla^*_i(\sigma)$ contains a 
 single strict 2-block. Then, without loss of generality, $\sigma$ contains the subsequence $k,k+1$, where $k-1$ and $k+2$ do not appear adjacent to $k$ and $k+1$, so we can divide $\sigma$ into 4 parts such that $\sigma=[\delta \quad  k \quad k+1 \quad \epsilon]$ where both $\delta$ and $\epsilon$ are sequences that do not contain any $2-$block (and might be empty). If we now remove the element $k+1$ from $\pi$, and standardize, we get $\tau=[\delta' \quad k \quad \epsilon']\in S_{n-2}$ where $k-1$ and $k+1$ do not appear adjacent to $k$ and $\delta',\epsilon'$ do not contain any $2-$block.  This means that $\tau=\nabla^*_{k+1} \nabla^*_i(\sigma)\in K_{n-2}$ and thus by Proposition \ref{grandson implies son}, $\sigma$ has a prince. 
 \end{proof}

 \begin{example}
 Let $\sigma=[361472958]\in K_9$. After omitting the element $5$ and normalizing we get $\nabla^*_5(\sigma)=\pi=[351462{\bf87}]$ which has the strict $2-$block $87$. By removing from $\pi$ the element $8$ (or the element $7$) and standardizing, we get $\nabla^*_8(\pi)=\tau=[3514627]\in K_7$.  
 This implies the existence of a prince of $\sigma$ (for example $[51362847]$).  
The situation is depicted in the following:

\begin{tikzpicture}
        \tikzstyle{every node} = [rectangle]
        \node (sigma) at (0,0) {$\sigma=[361472958]\in K_9$};
        \node (sigmapi) at (0.6,-1) {$\nabla^*_5(\sigma)$};
        \node (nabla5) at (0,-2) {$\pi=[351462{\bf87}] \in S_8-K_8$};
        \node (pitau) at (0.6,-3) {$\nabla^*_8(\pi)$};
        \node (nabla8) at (0,-4) {$\tau=[3514627]\in K_7$};
        
        \node (griralogit) at (4,-2) {$\Longrightarrow$};
        \node (sigma1) at (8,0) {$\sigma=[361472958]\in K_9$};
        \node (sigmareg) at (8.6,-1) {$\nabla^*_3(\sigma)$};
        \node (sigmaprince) at (8,-2) {$[51362847] \in K_8$};

    \foreach \from/\to in {sigma/nabla5, nabla5/nabla8, sigma1/sigmaprince}
            \draw[->] (\from) -- (\to);
  \end{tikzpicture}

 \end{example}

 The permutations $\pi \in K_n$ which have no princes have a special structure. An example for such an element is $\pi=[7,5,8,6,2,4,1,3,10,12,9,11]\in K_{12}$. Note that $\pi$ has
 the following structure: $\pi=213[3142,2413,2413]$. (Recall the definition of inflation from \ref{inflation}).
 Moreover, by removing any value from $\pi$ we get a $3-$block. (by way of illustration, if we remove the element $8$ and standardise we get $[{\bf 7},{\bf 5},{\bf 6},2,4,1,3,9,11,8,10]$). 
 
 The following theorem shows that this is not coincidental. 
 
\begin{theorem}\label{3 equivalents}
The following conditions are equivalent for each $\pi \in K_n$ with $n\geq 4$.
\begin{enumerate}
    \item There are $\pi^1,\dots, \pi^k \in \{[3142],[2413]\}$ and $\pi' \in S_k$ such that $\pi=\pi'[\pi^1,\dots,\pi^k]$.
    \item For each $i \in \{1,\dots,n\}$, by removing $i$ from $\pi$, we get a block of length $3$. 
    \item $\pi$ has no princes.
\end{enumerate}

\end{theorem}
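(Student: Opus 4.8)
The plan is to prove the cyclic chain of implications $(1)\Rightarrow(2)\Rightarrow(3)\Rightarrow(1)$, which concentrates essentially all of the difficulty in the last step.

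For $(1)\Rightarrow(2)$ I would argue directly from the inflation (Definition~\ref{inflation}). Write $\pi=\pi'[\pi^1,\dots,\pi^k]$ and let $i$ be any value; it lies in exactly one block $\pi^j$, which occupies four consecutive positions and carries four consecutive values. Deleting $i$ leaves the other three entries of $\pi^j$ in three consecutive positions, and after standardization their values again form a consecutive triple, so these three entries constitute a $3$-block of the deletion. A short check, using that both $[3142]$ and $[2413]$ place their extreme values in their interior, shows this triple is not enlarged across the neighbouring blocks. This is exactly condition (2).

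For $(2)\Rightarrow(3)$ I would use the elementary fact that every pattern in $S_3$ contains a $2$-block, so any block of length $3$ contains a $2$-block. Hence under (2) every single-element deletion of $\pi$ produces a permutation that is not a king, i.e. $\pi$ has no prince, which is (3).

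The substance of the theorem is $(3)\Rightarrow(1)$, and here the first move is to restate (3) in the language of Definition~\ref{def separate}: by the definition of a separator, deleting the value $i$ from the king $\pi$ creates a (necessarily new) $2$-block precisely when $i$ is a separator, so $\pi$ has no prince if and only if \emph{every} value of $\pi$ is a separator. I would then proceed by strong induction on $n$, the base case $n=4$ being $K_4=\{[3142],[2413]\}$. For the inductive step I would isolate the block of the four smallest values: value $1$ is a vertical separator (Remark~\ref{obs on Sep}), so it is flanked in position by two consecutive values $b,b+1$; deleting $1$ creates a single new $2$-block, and since $\pi$ has no prince, Lemma~\ref{strict 2 block implies a prince} forbids this from being the lone strict $2$-block, forcing it to lie inside a strict block of length at least $3$ of $\nabla^*_1(\pi)$. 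Reinstating value $1$ turns this into a block $D$ of $\pi$ carrying the values $\{1,2,3,4\}$; using that every value is a separator together with the separator-interchange results (Lemma~\ref{i and j}, Lemma~\ref{same type}, Observation~\ref{same block}), I would show $D$ has length exactly $4$, hence is a king of length $4$, i.e. $D\cong[3142]$ or $D\cong[2413]$.

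Finally I would deflate $D$ to a single entry, obtaining $\bar\pi\in S_{n-3}$, verify that $\bar\pi$ is again a king in which every value is a separator, apply the induction hypothesis to write $\bar\pi$ as an inflation of copies of $[3142]$ and $[2413]$, and then re-inflate the collapsed entry back to $D$ to exhibit $\pi$ in the form required by (1). The main obstacle is the pair of claims inside the inductive step: that the block $D$ must have length exactly four (ruling out longer king blocks, equivalently simple skeletons of length $\ge 5$, which is exactly where the global force of ``every value is a separator'' — as opposed to the mere non-kingness of a single deletion — must be exploited), and that the separator property descends to the deflation $\bar\pi$. I expect the separator-type lemmas and Theorem~\ref{Each king contains 3142 or 2413} to be the tools that carry these two points.
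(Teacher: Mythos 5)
Your handling of $(1)\Rightarrow(2)\Rightarrow(3)$, your reduction of (3) to ``every value of $\pi$ is a separator'', and your opening moves for $(3)\Rightarrow(1)$ (the value $1$ must be a vertical separator, and by Lemma~\ref{strict 2 block implies a prince} the unique new $2$-block of $\nabla^*_1(\pi)$ must sit inside a block of length at least $3$) all agree with the paper. But the way you close the induction contains a genuine gap, and it is not a point that can be ``verified'': the deflation step is false, not merely unproven. If $\pi$ has no princes and $D$ is the block on $\{1,2,3,4\}$, the permutation $\bar\pi\in S_{n-3}$ obtained by deflating $D$ to a single entry essentially never has the property that every value is a separator. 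The collapsed entry becomes the value $1$ of $\bar\pi$, so it could only be a vertical separator, which would require the two values flanking $D$ in $\pi$ to differ by $1$; this never happens in a prince-less king. Concretely, take the paper's own example $\pi=[7,5,8,6,2,4,1,3,10,12,9,11]$: deflating the block $2,4,1,3$ gives $\bar\pi=[4,2,5,3,1,7,9,6,8]$, whose entry $1$ is flanked by $3$ and $7$, hence is not a separator, so $\bar\pi$ has a prince and the induction hypothesis cannot be invoked. (This is no accident: if the scheme worked, the induction hypothesis would exhibit $\bar\pi$ as an inflation of copies of $[3142]$ and $[2413]$, forcing $4\mid n-3$, while re-inflating $D$ would exhibit $\pi$ as such an inflation, forcing $4\mid n$; both cannot hold.) A workable repair is to delete all four entries of $D$ rather than deflate it, which additionally requires proving that the two values flanking $D$ do not differ by $1$; the paper instead avoids any deflation: having locked in the block on $\{1,2,3,4\}$, it repeats the same local argument on $\pi$ itself with $5$ in place of $1$ (the value $4$ is interior to its block, so $5$ cannot be a horizontal separator), then with $9$, $13$, and so on.

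A second, smaller gap: your assertion that reinstating $1$ into the $\geq 3$-block of $\nabla^*_1(\pi)$ ``turns this into a block $D$ of $\pi$ carrying the values $\{1,2,3,4\}$'' is exactly the crux, not a formality. If $\pi$ contains, say, the consecutive subsequence $7,1,8,6$, then deleting $1$ does create a $3$-block, yet the entries $7,1,8,6$ form no block of $\pi$ at all; what must be ruled out is every configuration $b,1,b+1,b-1$ with $b>3$, as well as $b+2,b,1,b+1$ for any $b$ (up to reversal), and only after that is ``$D$ has length exactly $4$'' even meaningful. The paper does this by a direct case analysis driven by the separator hypothesis: $b+1$ must itself be a separator, which forces either $b=3$ (vertical case) or a consecutive subsequence $b+2,b,1,b+1,b-1$ in which $b+2$ can be a separator of no type, a contradiction. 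The lemmas you propose to use here (Lemma~\ref{i and j}, Lemma~\ref{same type}, Observation~\ref{same block}) are the tools for Proposition~\ref{grandson implies son} and Theorem~\ref{1 or 3}; they are not used in the paper's proof of this theorem, and it is not apparent that they suffice for this step.
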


\begin{proof}
As the first two implications are trivial ($(1) \Rightarrow (2)$ and $(2) \Rightarrow (3)$),
we prove $(3) \Rightarrow (1)$. 
Assume that $\pi \in K_{n}$ has no princes. We prove that for every $k\equiv 3$ $(mod$ $4)$ such that $k+1\leq n$: either $k,k-2,k+1,k-1$ or its reversal $k-1,k+1,k-2,k$ is a subsequence of $\pi$. 

We start with $k=3$:

Since $\pi$ has no princes, removing any element of $\pi$ will give us a $2$-block. 
That block cannot be a single strict $2$- block, otherwise by Lemma \ref{strict 2 block implies a prince}, $\pi$ has a prince. 

Hence, removing any element from $\pi$ will give us either a 
$3$-block or two strict $2$-blocks.  Note that the only way to get two 
strict $2$-blocks is by removing a separator of both types, though 
it is not true that whenever we remove such a separator we get two strict $2$-blocks.

We prove now that one of the consecutive subsequences $3142$ and $2413$ must appear in $\pi$. First, remove the element $1$ from $\pi$. Since $1$ cannot be a horizontal separator, its removal creates a $3$-block, and hence $\pi$ itself contains the consecutive subsequence $b,1,b+1,b-1$ or its reversal: $b-1,b+1,1,b$ for some $b \geq 3$. (The consecutive subsequence $b,1,b-1,b+1$ and its reversal are impossible since each element of $\pi$ is a separator, by the assumption that $\pi$ has no princes, 
but had the consecutive subsequence been $b,1,b-1,b+1$, the element $b-1$ clearly would not have been a vertical separator. Furthermore, if $b-1$ was a horizontal separator, $\pi$ would contain the consecutive subsequence: $b-2,b,1,b-1,b+1$ but then $b+1$ could not be a separator of any type). 
Now, we are done as soon as we show that $b=3$. In order to do that, consider the removal of $b+1$ from the subsequence $b,1,b+1,b-1$. If $b+1$ is a vertical separator then we must have $b=3$. Otherwise, $b+1$ is a horizontal separator and thus $\pi$ contains the consecutive subsequence $b+2,b,1,b+1,b-1$, but this cannot hold since 
in this case $b+2$ is not a separator of any type, which contradicts the assumption that $\pi$ has no princes. 

Now, for $k=7$, the same argument holds, provided that we start by removing $5$ instead of $1$. Since $4$ is confined between $1$ and $2$ in the case of $3142$ or between $4$ and $3$ in the case of $2413$, we have that $5$ can not be a horizontal separator, and we can continue as before. This procedure can be performed now sequentially for each $k$ such that $k=3(mod 4)$ and $k+1\leq n$. 

\end{proof}

We can now get an exact enumeration of the elements of $K_n$ that have no princes. 
\begin{cor}
The number of permutations in $K_n$ that have no princes is:

$$
\begin{cases}
2^k k! & {\text {if  } }   n=4k  \\
 0  &{\text otherwise}
\end{cases} \\$$

\end{cor}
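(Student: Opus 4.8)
The plan is to set up an explicit bijection between the king permutations in $K_{4k}$ having no princes and the set of tuples $(\pi',\pi^1,\dots,\pi^k)$ with $\pi' \in S_k$ and each $\pi^i \in \{[3142],[2413]\}$, realized by the inflation map $(\pi',\pi^1,\dots,\pi^k) \mapsto \pi'[\pi^1,\dots,\pi^k]$. Since this set of tuples has cardinality $|S_k|\cdot 2^k = 2^k k!$, and since Theorem \ref{3 equivalents} characterizes the absence of princes by membership in the image of this map, establishing that the map is a bijection yields the count. The case distinction is then immediate: an inflation of $k$ blocks of length $4$ has length exactly $4k$, so when $4 \nmid n$ no king permutation of length $n$ can have the form in condition $(1)$ of Theorem \ref{3 equivalents}, whence every king permutation of length $n$ has a prince and the count is $0$. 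The degenerate values $n \in \{2,3\}$ give $K_n = \emptyset$, so they also contribute $0$; we read the statement for $n \geq 4$ as in Theorem \ref{3 equivalents}.

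First I would verify that the map lands in the correct target, i.e.\ that for every choice of $\pi'$ and of the $\pi^i$ the inflation $\pi = \pi'[\pi^1,\dots,\pi^k]$ is genuinely a king permutation (it then automatically has no princes by the implication $(1)\Rightarrow(3)$ of Theorem \ref{3 equivalents}). To confirm $\pi \in K_{4k}$ I would check that no two adjacent positions carry values differing by $1$. Two adjacent positions lie either inside a single block, where the claim holds because $[3142]$ and $[2413]$ themselves belong to $K_4$ and an order-isomorphism onto a contiguous range preserves consecutive differences, or in two consecutive blocks $B_i,B_{i+1}$. The decisive observation is that in both $[3142]$ and $[2413]$ the last entry is the second- or third-smallest of its block and the first entry is likewise the second- or third-smallest; in particular neither the boundary value of $B_i$ nor that of $B_{i+1}$ is an extreme of its value range $R_i$, resp.\ $R_{i+1}$. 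If the two boundary values differed by $1$ they would be consecutive integers lying in two disjoint contiguous ranges, which forces each of them to be an extreme (a maximum or a minimum) of its own range, contradicting this observation.

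Next I would prove injectivity, which is exactly uniqueness of the representation. The key point is that any inflation of the form $(1)$ places its $k$ blocks in the position intervals $\{1,2,3,4\},\{5,6,7,8\},\dots,\{4k-3,\dots,4k\}$, and a partition of the positions $\{1,\dots,4k\}$ into contiguous intervals all of length $4$ is unique. Hence, given $\pi$, the blocks can be read off from these forced intervals: the values occupying each interval form a contiguous range whose pattern determines the corresponding $\pi^i \in \{[3142],[2413]\}$, while the relative order of the $k$ ranges determines $\pi'$. Thus distinct tuples yield distinct permutations, and the recovered tuple is the original one. Surjectivity is precisely the implication $(3)\Rightarrow(1)$ of Theorem \ref{3 equivalents}. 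Combining well-definedness, injectivity, and surjectivity gives the bijection and the count $2^k k!$.

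The main obstacle I anticipate is the boundary verification of the second paragraph: it is the only place where the specific structure of $[3142]$ and $[2413]$ — rather than merely the block size being $4$ — is used, and it is what guarantees that the inflation map does not leave the poset $\mathcal{K}$. The uniqueness-of-representation step, by contrast, is essentially forced once one notes that all blocks share the common length $4$, so that the positional grouping is rigid; the only additional care needed is to segregate the small cases $n \leq 3$ so that the stated formula is asserted exactly where Theorem \ref{3 equivalents} applies.
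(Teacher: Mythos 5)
Your proposal is correct and follows essentially the same route as the paper: invoke Theorem \ref{3 equivalents} to identify the prince-less kings with inflations $\pi'[\pi^1,\dots,\pi^k]$, $\pi'\in S_k$, $\pi^i\in\{[3142],[2413]\}$, and count $2^k k!$ such objects (forcing $n=4k$). The only difference is that you explicitly verify what the paper leaves implicit — that every such inflation is indeed a king permutation (via the boundary-value argument) and that distinct tuples give distinct permutations (via the rigidity of the length-$4$ position intervals) — which makes your write-up more complete but not a different proof.
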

\begin{proof}
According to Theorem \ref{3 equivalents}, if a permutation  $\pi \in K_n$  has no princes then there exist
$\pi^1,\dots, \pi^k \in \{[3142],[2413]\}$ and $\pi' \in S_k$ such that $\pi=\pi'[\pi^1,\dots,\pi^k]$. 
As a result, $n=4k$ and the permutation $\pi$ contains $k$ blocks of order 4, where for each such block we have two options: it can be of the form $[2413]$ or of the form $[3142]$. Thus there are $2^k k!$ options for such a permutation.
\end{proof}

Actually, the entire downset of an element that has no princes can be detected. So, let $\sigma \in K_n$ be such that $\sigma$ has no princes. By Theorem \ref{3 equivalents},   $\sigma=\sigma'[\sigma^1,\dots,\sigma^k]$, $\sigma^i\in \{[3142],[2143]\}$, $\sigma' \in S_k$ . We have the following:

\begin{cor}
Let $\sigma$ be as above. Then for each $\pi \in K_{l}$ ($l<n$) such that $\pi \prec \sigma$ we have $\pi=\pi'[\pi^1,\dots,\pi^m]$, $(m\leq k)$, where $\pi^i\in \{[3142],[2143],[1]\}$ and $\pi'\in S_m$ is such that $\pi' \prec \sigma'$. 
\end{cor}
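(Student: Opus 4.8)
The plan is to work directly with an embedding of $\pi$ into $\sigma$ and to show that any such embedding is forced to respect the length-$4$ block decomposition of $\sigma$. Write $\sigma=\sigma'[\sigma^1,\dots,\sigma^k]$ as guaranteed by Theorem \ref{3 equivalents}, so that the positions $\{1,\dots,n\}$ split into $k$ consecutive length-$4$ intervals $P_1<\cdots<P_k$ (the blocks), the values split into $k$ consecutive intervals $V_1,\dots,V_k$, each $P_i$ carries the pattern $\sigma^i\in\{[2413],[3142]\}$, and the relative order of the blocks is recorded by $\sigma'$ (by Definition \ref{inflation}, $V_i$ lies below $V_j$ exactly when $\sigma'_i<\sigma'_j$). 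Since $\pi\prec\sigma$, fix a set $S$ of positions of $\sigma$ such that $\sigma$ restricted to $S$ is order-isomorphic to $\pi$, and set $s_i=|S\cap P_i|$.

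The key claim, which I expect to be the crux, is that $s_i\in\{0,1,4\}$ for every $i$. To rule out $s_i\in\{2,3\}$ I would first observe that because the $P_i$ occupy pairwise disjoint position-intervals and the $V_i$ pairwise disjoint value-intervals, the selected entries $S\cap P_i$ occupy consecutive positions in $\pi$ and their values form a consecutive range; that is, the image of $S\cap P_i$ is a genuine block of $\pi$, order-isomorphic to the pattern obtained by restricting $\sigma^i$ to the selected positions. Consequently any $2$-block occurring inside that restricted pattern is an honest $2$-block of $\pi$. Now if $s_i=2$ the restricted pattern is $[12]$ or $[21]$, which is itself a $2$-block; and if $s_i=3$ the restricted pattern lies in $S_3$, and since $K_3=\emptyset$ every element of $S_3$ contains a $2$-block. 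In either case $\pi$ would contain a $2$-block, contradicting $\pi\in K_l$. This forces $s_i\in\{0,1,4\}$. I expect the only delicate point here to be the block-survival statement (that a $2$-block inside $S\cap P_i$ remains a $2$-block of $\pi$), which rests precisely on the disjointness of the position- and value-intervals of distinct blocks.

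With the claim in hand the decomposition reads off immediately. Let $M=\{i:s_i\ge 1\}$ and $m=|M|\le k$. Each block with $s_i=4$ contributes an intact copy of $\sigma^i\in\{[2413],[3142]\}$, each block with $s_i=1$ contributes a singleton $[1]$, and blocks with $s_i=0$ contribute nothing. Since these images are blocks of $\pi$ whose relative order (in both position and value) agrees with that of the corresponding blocks of $\sigma$, we obtain $\pi=\pi'[\pi^1,\dots,\pi^m]$ with each $\pi^j\in\{[2413],[3142],[1]\}$ and with outer shape $\pi'$ equal to $\sigma'$ restricted to the indices in $M$. As a sub-pattern of $\sigma'$, this gives $\pi'\preceq\sigma'$. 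The containment is strict precisely when $M\subsetneq\{1,\dots,k\}$, i.e. when at least one block of $\sigma$ is omitted entirely; if instead every block is used, then $l<n$ forces some $s_i=1$ and one gets $\pi'=\sigma'$. Thus the conclusion is literally $\pi'\preceq\sigma'$, with equality possible, and I would record this slight correction to the stated $\prec$.
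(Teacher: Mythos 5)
Your proof is correct and takes essentially the same approach as the paper: the paper's entire proof is the one-line observation that to obtain a king $\pi \prec \sigma$ one must remove at least $3$ elements from any block that loses an element at all, which is exactly your key claim that each $4$-block contributes $0$, $1$, or $4$ entries (since $2$ or $3$ surviving entries would form a block of $\pi$ containing a $2$-block, as $K_2=K_3=\emptyset$); your write-up simply supplies the block-survival details the paper leaves implicit. Your remark that the conclusion should read $\pi' \preceq \sigma'$ rather than $\pi' \prec \sigma'$ (equality occurs when every block of $\sigma$ contributes, at least one of them as a singleton) is a valid correction to the statement as printed.
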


\begin{proof}
In order to get $\pi \prec \sigma$ such that $\pi$ is a king, we must remove at least $3$
elements from a single block $\pi_i$.  
\end{proof}
% number of ninim is n/4. also about bney ninim. 
%\begin{observation}
%If $\pi \in K_{n}$ for $n=4m, m \in \mathbb{N}$ is decomposable into blocks of the forms $[2413]$ %and $[3142]$, then for each  $\sigma \prec \pi$
%\begin{enumerate}
 %   \item $\sigma \notin K_{n-1}$
  %  \item $\sigma \notin K_{n-2}$
   % \item there are exactly $m$    
% $\sigma \in K_{n-3}$. 

%\end{enumerate}

%\end{observation}

The following Theorem adds some more information about the structure of the posets of king permutations. 

\begin{theorem}\label{chamsa}
Let $n>4$. For each $\sigma \in K_n$ there exists  $\pi \in K_5$ s.t. $\pi \preceq \sigma$. 
\end{theorem}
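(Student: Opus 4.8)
The plan is to argue by strong induction on $n$, using the prince/no-prince dichotomy of Theorem \ref{3 equivalents} to reduce everything to a single structured case. The base case $n=5$ is immediate, since any $\sigma \in K_5$ contains itself. So I would fix $n \geq 6$ and assume the statement for every king permutation of order strictly between $4$ and $n$.

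First I would dispose of the case where $\sigma$ has a prince. If $\tau \in K_{n-1}$ is a prince of $\sigma$, then $\tau \prec \sigma$ and, since $n-1 \geq 5$, the induction hypothesis yields some $\pi \in K_5$ with $\pi \preceq \tau$; transitivity of the containment order then gives $\pi \preceq \sigma$, and we are done. Thus the entire difficulty is concentrated in kings that have no prince.

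For the remaining case, suppose $\sigma$ has no prince. By Theorem \ref{3 equivalents} I may write $\sigma = \sigma'[\sigma^1,\ldots,\sigma^k]$ with each $\sigma^i \in \{[3142],[2413]\}$ and $n = 4k$; since $n \geq 6$ this forces $k \geq 2$, so $\sigma$ has at least two order-$4$ blocks. I would then take the four entries of the block $\sigma^1$ together with a single entry $x$ chosen from the block $\sigma^2$. Because the blocks of an inflation occupy pairwise disjoint value-intervals and pairwise disjoint position-intervals, within this five-entry subsequence $x$ lies entirely to one side of $\sigma^1$ positionally and is either larger or smaller than every entry of $\sigma^1$. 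Consequently the standardization of the chosen five entries is obtained from $[3142]$ or $[2413]$ by appending an extremal value at the front or at the back. A direct check of the eight resulting patterns (two base patterns, times the two position choices, times the two value choices) shows each of them lies in $K_5$; for the $[3142]$ block these are $[31425]$, $[14253]$, $[53142]$, $[42531]$, and the $[2413]$ block gives the analogous four. Hence $\sigma$ contains a $K_5$ element, which completes the induction.

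The argument has no serious obstacle once the dichotomy is in place: the prince case is pure bookkeeping via the induction hypothesis, and the no-prince case collapses to a finite verification enabled by the rigid inflation structure. The one point requiring genuine care is the claim that attaching an entry of a second block to an order-$4$ block never destroys the king property; this is precisely why the extremality of $x$ in \emph{both} value and position is essential, and it is what I would confirm explicitly by listing the eight patterns above. I note that Theorem \ref{Each king contains 3142 or 2413} is doing the base-pattern work implicitly here, since it guarantees that each relevant block is one of $[3142]$ or $[2413]$.
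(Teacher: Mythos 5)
Your proposal is correct and follows essentially the same route as the paper: induction on $n$, handling kings with a prince via the induction hypothesis, and handling prince-less kings via the inflation structure of Theorem \ref{3 equivalents} by taking the first $[3142]$ or $[2413]$ block together with one entry of the next block (the paper simply takes the first five entries of $\sigma$ and checks the four resulting patterns $\{24135,31425,35241,42531\}\subseteq K_5$; your eight-pattern check is the same verification, made slightly redundant since an entry of $\sigma^2$ always lies positionally after $\sigma^1$).
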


\begin{proof}
We prove by induction on $n$. The case $n=5$ being trivial. Assume that each king permutation of order $n-1$ contains a king permutation of order $5$ and prove this for king permutations of order $n$.
Let $\sigma \in K_n$. If $\sigma$ has a prince $\tau \in K_{n-1}$, then by the induction hypothesis there exists  $\pi \in K_5$ s.t. $\pi \preceq \tau \preceq \sigma$ and we are done. 
Otherwise, by Equation \ref{3 equivalents}, $\sigma$ is of the form $\sigma=\sigma'[\sigma^1,\dots,\sigma^k]$ with $\sigma^i \in \{[3142],[2413]\}$, $\sigma'\in S_k$. The first five elements of $\sigma$ are order isomorphic to
one of the permutations in the set 
$\{24135,31425,35241,42531\}\subseteq K_5$.

\end{proof}

\section{The M\"obius function of the poset of king permutations}
\label{mobious}

In this section we present some results regarding the M\"obius function of $\mathcal{K}$. We start with an example depicting the poset of the downset of the king permutation $[5246173]$. The red circled number above each permutation $\pi$ is the value of $\mu(\pi)$.

\begin{tikzpicture}
        \tikzstyle{every node} = [rectangle]
        %\node (name) at (0,-1) {$eli$};
        \node (1) at (0,0) {$[1]$};
        \node at (0,0.5)  {\textcircled{\textcolor{red}1}};
        \node (2413) at (-2,1) {$[2413]$};
        \node at (-2,1.5)  {\textcircled{\textcolor{red}{-1}}};
        \node (3142) at (2,1) {$[3142]$};
        \node at (2,1.5)  {\textcircled{\textcolor{red}{-1}}};
        
        \node (24153) at (-3,3) {$[24153]$};
        \node at (-3,3.5)  {\textcircled{\textcolor{red}1}};
        
        \node (42513) at (-1,3) {$[42513]$};
        \node at (-1,3.5)  {\textcircled{\textcolor{red}1}};
        
        \node (41352) at (1,3) {$[41352]$};
        \node at (1.0,3.5)  {\textcircled{\textcolor{red}0}};
        
        \node (52413) at (3,3) {$[52413]$};
        \node at (3,3.5)  {\textcircled{\textcolor{red}0}};
        
        \node (425163) at (3,5) {$[425163]$};
        \node at (3,5.5)  {\textcircled{\textcolor{red}{-1}}};
        
        \node (524163) at (0,5) {$[524163]$};
        \node at (-0.3,5.5)  {\textcircled{\textcolor{red}0}};
        
        \node (524613) at (-3,5) {$[524613]$};
        \node at (-3,5.5)  {\textcircled{\textcolor{red}0}};
        
        \node (5246173) at (0,7) {$[5246173]$};
        \node at (0,7.5)  {\textcircled{\textcolor{red}0}};
        
        \foreach \from/\to in {1/3142, 1/2413,2413/24153,2413/42513,2413/52413,
        3142/24153,3142/42513,3142/41352,24153/425163,24153/524163,42513/425163,42513/524613,41352/524163,41352/524613,52413/524163,52413/524613,524613/5246173,524163/5246173,425163/5246173}
            \draw[-] (\from) -- (\to);
    \end{tikzpicture}

We start with the following cornerstone of our treatment of the M\"obius function on $\mathcal{K}$.
\begin{theorem}
\label{sroch}
Let $\pi \in K_n$, with $n >4$. If $[2413] \nprec \pi$ or $[3142]\nprec \pi$ then $\mu(\pi)=0$ in $\mathcal{K}$.  
\end{theorem}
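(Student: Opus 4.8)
The plan is to exploit Theorem \ref{Each king contains 3142 or 2413} to show that the entire downset of $\pi$ collapses, in the order sense, onto a single atom, and then to read off $\mu(\pi)=0$ from a short Möbius computation.

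First I would normalize the hypothesis. Since $n>4$, Theorem \ref{Each king contains 3142 or 2413} guarantees that $\pi$ contains at least one of $[2413]$, $[3142]$, while the hypothesis forbids it from containing both. The reverse map $\sigma_i\mapsto\sigma_{n+1-i}$ sends $[3142]$ to $[2413]$ and is an order-automorphism of $\mathcal{K}$ that preserves $\mu$, so without loss of generality I may assume $[3142]\preceq\pi$ and $[2413]\nprec\pi$. Next I would describe the interval $P=[[1],\pi]$. Because $K_2=K_3=\emptyset$, every $\rho\in P$ with $\rho\neq[1]$ has length at least $4$, hence (again by Theorem \ref{Each king contains 3142 or 2413}) contains $[2413]$ or $[3142]$. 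Transitivity of containment rules out the first option, since $\rho\preceq\pi$ together with $[2413]\preceq\rho$ would force $[2413]\preceq\pi$. Therefore every $\rho\in P\setminus\{[1]\}$ satisfies $[3142]\preceq\rho$. Writing $\hat0=[1]$ and $a=[3142]$, the element $a$ is thus the unique atom of $P$ and lies below every element except $\hat 0$; moreover $a\neq\pi$ since $|a|=4<n$.

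The last step is a general Möbius lemma that I would state and prove in two lines: if a finite poset with bottom $\hat0$ and top $\hat1\neq a$ has an element $a$ with $a\preceq\rho$ for all $\rho\neq\hat0$, then $\mu(\hat0,\hat1)=0$. Writing $f(x)=\mu(\hat0,x)$ for $x\succeq a$, the defining recursion gives $f(x)=-1-\sum_{a\preceq y\prec x}f(y)$, since $\hat0$ is the only element below $x$ that is not $\succeq a$; equivalently $\sum_{a\preceq y\preceq x}f(y)=-1$ for every $x\succeq a$. Möbius inversion inside the interval $[a,\hat1]$ then yields $f(x)=-\sum_{a\preceq y\preceq x}\mu_{[a,\hat1]}(y,x)=-\delta_{a,x}$, so $f(x)=0$ whenever $x\succ a$. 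Applying this to $x=\pi$ gives $\mu(\pi)=f(\pi)=0$.

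I expect the only genuinely delicate point to be the structural claim that $[3142]$ sits below every nontrivial element of the downset of $\pi$: this is exactly where the building-block theorem and transitivity of containment combine, and it is precisely what fails when $\pi$ contains both patterns. Once that is in place, the Möbius computation is routine and self-contained, and it can alternatively be quoted as the standard fact that a bounded poset possessing an element comparable to all others and distinct from $\hat0,\hat1$ has vanishing Möbius function.
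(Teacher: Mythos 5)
Your proof is correct, but it is organized differently from the paper's. Both arguments rest on the same structural fact: by Theorem \ref{Each king contains 3142 or 2413} and transitivity of containment, every element of $[[1],\pi]$ other than $[1]$ contains the one pattern that $\pi$ contains, so the whole downset funnels through a single atom $a$. The paper exploits this by induction on $n$: it splits $\mu(\pi)=-\sum_{\sigma\in[1,\pi)}\mu(\sigma)$ into the closed interval $[1,a]$, which sums to zero by the definition of $\mu$, plus the interval $(a,\pi)$, whose terms vanish one by one by the induction hypothesis (each such $\sigma$ again contains $a$ but not the other pattern). You avoid induction on length entirely: you isolate the order-theoretic content as a lemma --- in a finite bounded poset, an element $a\neq\hat 0,\hat 1$ lying below every element except $\hat 0$ forces $\mu(\hat 0,\hat 1)=0$ --- and prove it in one stroke by M\"obius inversion in $[a,\hat 1]$; your computation $f(x)=-\delta_{a,x}$ is right, and in fact recovers what the paper's induction establishes along the way, namely $\mu(\sigma)=0$ for all $\sigma$ with $a\prec\sigma\preceq\pi$. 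Your justification of the ``without loss of generality'' step via the reverse map (an automorphism of $\mathcal{K}$ exchanging $[2413]$ and $[3142]$) is also a point the paper leaves implicit. The trade-off: the paper's induction stays entirely within the recursive definition of $\mu$, while your version is more modular and reusable, making transparent that once the single-atom structure of the interval is known, the vanishing of $\mu$ is a purely poset-theoretic fact.
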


\begin{proof}
We prove by induction on $n$. The basis is $n=5$ which can be easily checked. Assume that the claim holds for each 
$\pi \in K_l$, $4<l<n$. Let $\pi \in K_n$. By Theorem \ref{Each king contains 3142 or 2413}, we may assume without loss of generality that $\pi$ contains $[2413]$ but not $[3142]$.

Then $$\mu(\pi)=-\sum\limits_{\sigma \in [1,\pi)}{\mu(\sigma)}=-\sum\limits_{\sigma \in [1,[2413]]}{\mu(\sigma)}-\sum\limits_{\sigma \in ([2413],\pi)}{\mu(\sigma)}.$$ \\
The first summation is $0$ since the interval is closed, while each element in the second one is $0$ by the induction hypothesis. Hence we have $\mu(\pi)=0$. 
 
\end{proof}

For $\pi \in K_n$, define \[C(\pi)=\{\sigma \in \bigcup\limits_{l<n}{K_l}\mid \sigma \prec \pi,\text{ and there is no } \delta\in \bigcup\limits_{l<n}{K_l} \text{ such that } \sigma \prec \delta \prec \pi\}.\]

\begin{cor}
\label{cor42}
Let $\pi \in K_n$ be such that there is only one $\sigma \in C(\pi)$ such that $[3142] \prec \sigma$ and $[2413]\prec \sigma$. Then $\mu(\pi)=0$.
\end{cor}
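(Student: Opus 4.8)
The plan is to run the defining recursion for $\mu$ and use Theorem \ref{sroch} to collapse the relevant sum onto the downset of a single element. Write $\rho$ for the unique element of $C(\pi)$ that contains both $[2413]$ and $[3142]$. By definition $\mu(\pi)=-\sum_{\sigma\in[1,\pi)}\mu(\sigma)$, so it suffices to show this sum vanishes. The governing idea is that, once we discard every term with $\mu(\sigma)=0$, the surviving terms are exactly those of $\sum_{\sigma\preceq\rho}\mu(\sigma)$, which is $0$ by the recursion applied to $\rho$.

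The heart of the argument is the set equality
$$\{\sigma\prec\pi:\mu(\sigma)\neq 0\}=\{\sigma\preceq\rho:\mu(\sigma)\neq 0\}.$$
The inclusion $\supseteq$ is immediate since $\rho\prec\pi$. For $\subseteq$, I would take $\sigma\prec\pi$ with $\mu(\sigma)\neq 0$ and split on size. If $|\sigma|\leq 4$ then $\sigma\in\{[1],[2413],[3142]\}$, each of which is contained in $\rho$ because $\rho$ contains both building blocks by hypothesis. If $|\sigma|\geq 5$, then Theorem \ref{sroch} forces $\sigma$ to contain both $[2413]$ and $[3142]$. Here I would invoke the finiteness of the interval $[1,\pi]$: every element strictly below $\pi$ lies weakly below some element covered by $\pi$, so choose $\tau\in C(\pi)$ with $\sigma\preceq\tau$. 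Since containment is monotone and $\sigma$ already contains both patterns, so does $\tau$; by the uniqueness hypothesis $\tau=\rho$, whence $\sigma\preceq\rho$.

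Granting this equality, the sums $\sum_{\sigma\in[1,\pi)}\mu(\sigma)$ and $\sum_{\sigma\preceq\rho}\mu(\sigma)$ agree term by term, the omitted terms being zero. The latter sum equals $0$: it is precisely the defining recursion $\mu(\rho)=-\sum_{\sigma\in[1,\rho)}\mu(\sigma)$ rewritten as $\sum_{\sigma\preceq\rho}\mu(\sigma)=0$, which is valid because $\rho\neq[1]$. Hence $\mu(\pi)=0$.

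I expect the main obstacle to be the subset inclusion, and specifically the two facts it rests on: that every element below $\pi$ lies weakly below some member of $C(\pi)$ (a short finiteness argument, choosing a maximal element of $\{\delta:\sigma\preceq\delta\prec\pi\}$), and the monotonicity of pattern containment that promotes a cover to one containing both patterns. The small-order elements $[1],[2413],[3142]$ require separate handling, since Theorem \ref{sroch} applies only for order exceeding $4$; but they are harmless once one notes that $\rho$ contains both $[2413]$ and $[3142]$, so each of them is automatically $\preceq\rho$.
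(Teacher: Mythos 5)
Your proof is correct and takes essentially the same approach as the paper: the paper likewise splits $-\sum_{\tau\in[1,\pi)}\mu(\tau)$ into the closed interval $[1,\rho]$, which telescopes to zero by the defining recursion, plus the remaining terms, which vanish by Theorem \ref{sroch}. The only difference is that you spell out what the paper leaves implicit, namely the maximality argument showing that any $\sigma\prec\pi$ containing both patterns lies below the unique such coatom, and the separate treatment of $[1],[2413],[3142]$, to which Theorem \ref{sroch} does not apply.
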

\begin{proof}
Let $\sigma \in C(\pi)$ be the single element which contains both $[2413]$ and $[3142]$. 
$$\mu(\pi)=-\sum\limits_{\tau \in [1,\pi)}{\mu(\tau)}=-\sum\limits_{\tau \in [1,\sigma]}{\mu(\tau)}-\sum\limits_{\tau \in [1,\pi)-[1,\sigma]}{\mu(\tau)}.$$

The first summation vanishes since it runs through a closed interval, while the second summation vanishes by Theorem \ref{sroch}.

\end{proof}
For example:

\begin{tikzpicture}
        \tikzstyle{every node} = [rectangle]
        %\node (name) at (0,-1) {$eli$};
        \node (1) at (0,0) {$[1]$};
        \node at (0,0.5)  {\textcircled{\textcolor{red}1}};
        \node (2413) at (-2,1) {$[2413]$};
        \node at (-2,1.5)  {\textcircled{\textcolor{red}{-1}}};
        \node (3142) at (2,1) {$[3142]$};
        \node at (2,1.5)  {\textcircled{\textcolor{red}{-1}}};
        
        \node (24153) at (-3,3) {$[24153]$};
        \node at (-3,3.5)  {\textcircled{\textcolor{red}1}};

        \node (41352) at (1,3) {$[41352]$};
        \node at (1.0,3.5)  {\textcircled{\textcolor{red}0}};
        
        \node (52413) at (3,3) {$[52413]$};
        \node at (3,3.5)  {\textcircled{\textcolor{red}0}};

        \node (524163) at (0,5) {$[524163]$};
        \node at (-0.3,5.5)  {\textcircled{\textcolor{red}0}};

        \foreach \from/\to in {1/3142, 1/2413,2413/24153,2413/52413,
        3142/24153,3142/41352,24153/524163,41352/524163,52413/524163}
            \draw[-] (\from) -- (\to);
    \end{tikzpicture}

Corollary \ref{cor42} can be strengthen. In order to do that, we need the following: 

\begin{definition}

Let \[\mathbb{H}=\{[24153],[35142],[42513],[31524]\}.\] That is  $\mathbb{H}$ consists of all the elements of $K_5$ that contain both   $[2413]$ and $[3142]$.

\end{definition}

Note that in $K_5$, $\mu(\pi)=1$ if and only if $\pi \in \mathbb{H}$ (otherwise, by Theorem \ref{sroch}, $\mu(\pi)=0$).

\begin{theorem}
Let $\sigma \in K_n$ with $n>5$ such that there is only one $\pi \prec \sigma$ such that $\pi \in \mathbb{H}$ and for each $\pi' \prec \sigma$ such that $\pi \nprec \pi'$,  we have either $\pi'$ avoids $[3142]$, or $\pi'$ avoids $[2413]$. Then $\mu(\sigma)=0$.
\end{theorem}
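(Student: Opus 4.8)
The plan is to imitate the splitting argument of Corollary \ref{cor42}, but organized around the unique element $\pi \in \mathbb{H}$ lying below $\sigma$ rather than around a single cover. Start from $\mu(\sigma) = -\sum_{\tau \in [1,\sigma)}\mu(\tau)$ and partition the strict downset $[1,\sigma)$ into three pieces: (a) the three ``small'' kings $[1],[2413],[3142]$, all of which lie strictly below $\sigma$ because $\pi \in \mathbb{H}$ contains both length-$4$ patterns and $|\sigma|>5$; (b) the kings $\tau$ with $|\tau|\geq 5$, $\tau \prec \sigma$ and $\pi \not\preceq \tau$; and (c) the half-open interval $[\pi,\sigma)=\{\tau : \pi \preceq \tau \prec \sigma\}$, all of whose members have length $\geq 5$. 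Since $K_2=K_3=\emptyset$ and $K_4=\{[2413],[3142]\}$, this is genuinely a partition.

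Two of the pieces are immediate. Piece (a) contributes $\mu([1])+\mu([2413])+\mu([3142]) = 1-1-1 = -1$. For piece (b), every such $\tau$ has $\pi \not\preceq \tau$ (hence $\pi \nprec \tau$) and $\tau \prec \sigma$, so by hypothesis $\tau$ avoids $[2413]$ or $[3142]$; as $|\tau|>4$, Theorem \ref{sroch} gives $\mu(\tau)=0$, so piece (b) contributes $0$. It remains to show that piece (c) contributes exactly $+1$; granting this, the three pieces sum to $-1+0+1=0$ and hence $\mu(\sigma)=0$.

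For piece (c) I would first establish the auxiliary identity $F(\rho):=\sum_{\tau \in [\pi,\rho]}\mu(\tau)=1$ for every $\rho \in [\pi,\sigma)$. This follows by applying $\sum_{\tau \in [1,\rho]}\mu(\tau)=0$ (valid since $\rho \succeq \pi \neq [1]$) and discarding the vanishing terms: the small kings again give $-1$, while any $\tau \preceq \rho$ with $|\tau|\geq 5$ and $\mu(\tau)\neq 0$ must, by Theorem \ref{sroch}, contain both patterns, so $\tau \prec \sigma$ forces $\pi \preceq \tau$ by the uniqueness hypothesis; thus every surviving term lies in $[\pi,\rho]$, giving $0 = -1 + F(\rho)$. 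With this in hand I would prove by induction on $|\rho|$ that $\mu(\rho)=\delta_{\pi,\rho}$ for all $\rho \in [\pi,\sigma)$: the base case $\rho=\pi$ is $\mu(\pi)=1$ (as $\pi \in \mathbb{H}$), and for $\pi \prec \rho \prec \sigma$ the identity $F(\rho)=1$ together with the inductive conclusion $\sum_{\pi \preceq \tau \prec \rho}\mu(\tau)=1$ yields $\mu(\rho)=0$. Summing $\mu(\rho)=\delta_{\pi,\rho}$ over $[\pi,\sigma)$ then gives piece (c) $=1$, which completes the argument.

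The main obstacle is precisely the control of the open interval $(\pi,\sigma)$: one must rule out stray nonzero Möbius values strictly between $\pi$ and $\sigma$. The combination of Theorem \ref{sroch} (which kills everything not containing both patterns) with the uniqueness hypothesis (which forces everything that \emph{does} contain both patterns and lies below $\sigma$ to contain $\pi$) is exactly what makes the telescoping $F(\rho)\equiv 1 \Rightarrow \mu \equiv \delta_{\pi,\cdot}$ go through. I would also flag the minor interpretive point that ``$\pi \nprec \pi'$'' in the hypothesis should be read as $\pi \not\preceq \pi'$ (otherwise $\pi'=\pi \in \mathbb{H}$ would be required to avoid a pattern, which it does not); the proof only ever invokes the hypothesis for $\tau$ with $\pi \not\preceq \tau$, so it is insensitive to this subtlety.
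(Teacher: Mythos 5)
Your proof is correct, and it reaches the theorem by a genuinely different induction than the paper's. The paper argues by induction on $n=|\sigma|$ across instances of the theorem: it splits $[1,\sigma)$ as $(\pi,\sigma)\,\cup\,[1,\pi]\,\cup\,Z$ with $Z=[1,\sigma)-(\pi,\sigma)-[1,\pi]$, kills $[1,\pi]$ as a closed interval, kills $Z$ by Theorem \ref{sroch} together with the avoidance hypothesis, and kills each $\tau\in(\pi,\sigma)$ by invoking the theorem itself for smaller $n$ (after a separately treated base case $n=6$, where $(\pi,\sigma)$ is empty). That outer induction tacitly requires that every $\tau\in(\pi,\sigma)$ inherits both hypotheses of the theorem --- uniqueness of the element of $\mathbb{H}$ below $\tau$, and the avoidance condition for elements below $\tau$ --- a verification the paper leaves implicit. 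Your decomposition is the same set-theoretic split (your pieces (a) and (c) together are the paper's $[1,\pi]\cup(\pi,\sigma)$, and your piece (b) equals $Z$, the elements incomparable to $\pi$), and you use the same two vanishing mechanisms, but you replace the outer induction by a self-contained inner induction inside the single downset of $\sigma$: the telescoping identity $F(\rho)=\sum_{\tau\in[\pi,\rho]}\mu(\tau)=1$ for all $\rho\in[\pi,\sigma)$, derived from the defining recursion plus Theorem \ref{sroch} plus the avoidance hypothesis for $\sigma$ alone. This buys three things: no hypothesis-inheritance check is ever needed, no separate base case is needed, and you in fact obtain the slightly stronger conclusion that $\mu$ vanishes identically on all of $(\pi,\sigma]$, not merely at $\sigma$. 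One terminological quibble: what you call ``the uniqueness hypothesis'' in the final paragraph is really the contrapositive of the avoidance hypothesis (uniqueness of $\pi$ in $\mathbb{H}$ below $\sigma$ is itself a consequence of that hypothesis), but the logic you run is the correct one. Your flag about reading $\pi \nprec \pi'$ as $\pi \not\preceq \pi'$ is also right; the paper's own proof depends on the same reading, since its set $Z$ consists precisely of elements $\tau$ with $\pi \not\preceq \tau$.
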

\begin{proof}
We prove by induction on $n$. First, for $n=6$, let $\sigma \in K_6$ be such that the assumptions of the theorem are satisfied. Then, the permutation $\sigma$ has only one prince $\pi \in \mathbb{H}$. The other princes avoid $[2413]$ or avoid $[3142]$. Let $X=[1,\pi]$ and $Y=[1,\sigma)-X$. We have $$\mu(\sigma)=-\sum\limits_{\tau \in [1,\sigma)}\mu(\tau)=-\sum\limits_{\tau \in X}\mu(\tau)-\sum\limits_{\tau \in Y}\mu(\tau).$$ This sum vanishes since the first summation is over a closed interval while the second one vanishes by Theorem \ref{sroch}.

Now, let $n \geq 6$ and assume the validity of the claim for $5<k<n$. Let $\sigma \in K_n$ satisfy  our assumptions. Then
\begin{equation}
\label{form1}
 \mu(\sigma)=-\sum\limits_{\tau \in [1,\sigma)}\mu(\tau)=
-\sum\limits_{\textcolor{purple}{\tau \in (\pi, \sigma)}}\mu(\tau)
-\sum\limits_{\textcolor{red}{\tau \in[1,\pi]}}\mu(\tau)-
\sum\limits_{\textcolor{blue}{\tau \in Z}}\mu(\tau)   
\end{equation}
where $\textcolor{blue}{Z=[1,\sigma)-(\pi,\sigma)-[1,\pi]}$.
The elements of the first summation vanish by the induction hypothesis, the second one vanishes since it runs through a closed interval while the third one contains only elements which avoid $[2413]$ or avoid $[3142]$ and thus vanishes by Theorem \ref{sroch}. 
For example (the colors in the poset refer to the colors in Equation (\ref{form1}) :

\begin{tikzpicture}
        \tikzstyle{every node} = [rectangle]
        %\node (name) at (0,-1) {$eli$};
        
    \node (6241537) at (0,8) {$\sigma=[6241537]$};
        %\node at (-0.3,5.5)  {\textcircled{\textcolor{red}0}};

        \node (241536) at (-3,6) {$\textcolor{purple}{[241536]}$};
        \node (531426) at (-1,6) {$\textcolor{blue}{[531426]}$};

        \node (524136) at (1,6) {$\textcolor{blue}{[524136]}$};
        \node (624153) at (3,6) {$\textcolor{purple}{[624153]}$};
        
        \node (31425) at (-4,4) {$\textcolor{blue}{[31425]}$};
        \node (24135) at (-2,4) {$\textcolor{blue}{[24135]}$};
        \node (24153) at (0,4) {$\textcolor{red}{\pi=[24153]}$};
        \node (53142) at (2,4) {$\textcolor{blue}{[53142]}$};
        \node (52413) at (4,4) {$\textcolor{blue}{[52413]}$};
        
        \node (3142) at (-2,2) {$\textcolor{red}{[3142]}$};
        \node (2413) at (2,2) {$\textcolor{red}{[2413]}$};
        \node (1) at (0,0) {$\textcolor{red}{[1]}$};

        \foreach \from/\to in {1/3142,1/2413,2413/24135,2413/24153,2413/52413,3142/31425,3142/24153,3142/53142,31425/241536,31425/531426,24135/241536,24135/524136,24153/241536,24153/624153,53142/531426,31425/531426,52413/524136,52413/624153,241536/6241537,531426/6241537,524136/6241537,624153/6241537}
        \draw[-] (\from) -- (\to);
    \end{tikzpicture}
    \end{proof}

\section{Further directions}

\begin{enumerate}
\item 
We saw that a $\pi \in S_n$ being a king permutation is equivalent to having $br(\pi)\geq 3$. 
It might be interesting to characterize from the point view of permutation patterns the permutations  $\pi \in S_n$ having $br(\pi)\geq d$ for $d>3$. In \cite{BHW}, Blackburn, Homberger and Winkler showed that when $d$ is fixed and $n \rightarrow \infty$, the probability that $br(\pi) \geq d+2$ tends to $e^{-d^2-d}$. 
A challenge may be to explore the structure of the sub-poset consisting of these permutations for constant $d$. 

\item 
 A natural question is to extend the Hertzsprung’s problem of finding the number of ways to arrange $n$ non-attacking kings on an $n \times n$ chess board to a variant of the chess game, namely the cylindrical chess,  in which the right and left edges of the board are imagined to be joined in such a way that a piece can move off one edge and reappear at the opposite edge, moving in the same or a parallel line. Some steps in this direction were made by the authors of this article in \cite{counting}.

\end{enumerate}

%Sloane, N. J. A. Sequences A008794 and A075561 in "The On-Line Encyclopedia of Integer Sequences."

\end{document}